\documentclass[11pt,letterpaper]{amsart}
\usepackage{amsthm}

\usepackage{mathrsfs}
\usepackage{mathtools}
\usepackage{slashed}
\usepackage{tikz-cd}

\usepackage{amssymb}
\usepackage{amsfonts}
\usepackage{amsmath}
\usepackage{graphicx}
\usepackage{xcolor}
\usepackage{amsmath}
\usepackage{mathrsfs}
\usepackage{stmaryrd}
\usepackage{epsfig,color}
\usepackage{blindtext}
\usepackage{enumerate}
\usepackage[colorlinks=true,linkcolor=red,citecolor=blue]{hyperref}
\usepackage[noabbrev,capitalize]{cleveref}

\usepackage{url}
\usepackage{nicefrac,mathtools}
\DeclareGraphicsExtensions{.pdf,.jpeg,.png}
\usepackage{epstopdf}
\usepackage{cancel} 
\usepackage[normalem]{ulem} 
\usepackage{verbatim} 
\usepackage{enumitem} 
\usepackage{color}
\usepackage[msc-links, lite, alphabetic]{amsrefs}
\usepackage{geometry}
\DeclareMathOperator{\divsymb}{div}

\DeclareMathOperator{\tr}{\rm tr}

\DeclareMathOperator{\vol}{Vol}

\DeclareMathOperator{\Ric}{Ric}

\DeclareMathOperator{\Hess}{\mathrm{Hess}}

\renewcommand{\subset}{\subseteq}

\newcommand{\del}{\partial}

\newcommand{\lp}{\langle}
\newcommand{\rp}{\rangle}

\newcommand{\mL}{\mathcal{L}}

\newcommand{\kQ}{\mathfrak{Q}}

\newcommand{\kq}{\mathfrak{q}}

\newcommand{\mb}{\mathbb}

\newcommand{\n}{\mathbf n}

\newcommand{\R}{\mathbb{R}}
\renewcommand{\subset}{\subseteq}

\newcommand{\dist}{\mathsf{d}}

\newcommand{\di}{\mathop{}\!\mathrm{d}}

\DeclareMathOperator{\RCD}{RCD}

\def\XXint#1#2#3{{\setbox0=\hbox{$#1{#2#3}{\int}$ }
\vcenter{\hbox{$#2#3$ }}\kern-.6\wd0}}

\def\sideremark#1{\ifvmode\leavevmode\fi\vadjust{\vbox to0pt{\vss
 \hbox to 0pt{\hskip\hsize\hskip1em
 \vbox{\hsize3cm\tiny\raggedright\pretolerance10000
 \noindent #1\hfill}\hss}\vbox to8pt{\vfil}\vss}}}

\newtheorem{theorem}{Theorem}[section]

\newtheorem{proposition}[theorem]{Proposition}
\newtheorem{lemma}[theorem]{Lemma}

\theoremstyle{definition}
\newtheorem{definition}[theorem]{Definition}

\newtheorem{conjecture}[theorem]{Conjecture}
\newtheorem{remark}[theorem]{Remark}

\numberwithin{equation}{section}

\allowdisplaybreaks[4]

\title{Nonnegative Ricci Curvature and Uniformly Convex Boundary Forces Compactness}
\author[Z. Yan]{Zetian Yan}
\address[Z. Yan]{Department of Mathematics \\ UC Santa Barbara \\ Santa Barbara \\ CA 93106 \\ USA}
\email{ztyan@ucsb.edu}

\author[X. Zhu]{Xingyu Zhu}
\address[X. Zhu]{Department of Mathematics \\ Michigan State University \\ East Lansing \\ MI 48824 \\ USA}
\email{zhuxing3@msu.edu}
\keywords{Manifolds with boundary, nonnegative Ricci curvature, convex boundary, Neumann Green's function}

\subjclass[2020]{53C21, 53C24}

\begin{document}

\begin{abstract}
We confirm a compactness conjecture of M. Li. If a complete Riemannian manifold has nonnegative Ricci curvature and uniformly convex boundary in the sense that the second fundamental form satisfies $h\ge1$. Then we prove it is compact, and consequently has finite fundamental group. The proof uses monotone quantities constructed via positive proper harmonic functions with Neumann condition.
\end{abstract}

\maketitle

\section{Introduction}

\subsection{Backgrounds and statements.} A central theme in Riemannian geometry is to understand how lower curvature bounds constrain the global geometry and topology of a complete manifold. We are concerned with this philosophy for manifolds with boundary. A natural condition is to combine a lower Ricci curvature bound in the interior with a convexity or mean convexity assumption on the boundary. 

A stunning result about the combined effect of interior and boundary curvature lower bound is obtained by Hamilton \cite{HamiltonCompactness}, which states that a convex hypersurface in $\R^n$ with uniformly pinched second fundamental must be compact.


The purpose of this paper is to prove the conjecture of M. Li that substantially generalizes the result of Hamilton. This conjecture is also a version of Bonnet--Myers theorem for manifolds with boundary, where the strict positivity is imposed on the boundary. It is stated as follows. 

\begin{conjecture}[\cite{LiMartin2014}*{Conjecture 1.3}]\label{conj:Li}
Let $(M^n,g)$ be a complete Riemannian manifold with smooth nonempty  boundary
$\partial M$. Suppose $M$ has nonnegative Ricci curvature and $\partial M$ is uniformly convex i.e., the second fundamental form $h$ satisfies  $h\ge k>0$ for some constant $k$. Then $M$ is compact and $\pi_1(M)$ is finite.
\end{conjecture}

If one strengthens the nonnegative Ricci curvature assumption to nonnegative sectional curvature, the corresponding conjecture is proven by \cite{RadiusAlexandrov}. This includes the two-dimensional case of Conjecture~\ref{conj:Li}. An alternative variational proof is given by Y. Wu~\cite{Wu2025}; see also \cite{BuragoZalgaller1977}. In fact \cite{RadiusAlexandrov} and \cite{Wu2025} also obtained the sharp area upper bound of the boundary, and obtained rigidity of the boundary in the equality case. The sharp boundary area upper bound is closely related to a conjecture of Wang \cite{Wang2021}*{Conjecture 2}, which asks for a compact manifold $M$ with boundary $\del M$ satisfies  $\Ric\ge 0$, $h\ge 1$ if $|\partial M|\le |\mb S^{n-1}|$ holds with equality iff $M$ is isometric to the Euclidean disk $\mb B^n$.
 There are several work supporting the strong rigidity of the set of assumptions $\Ric\ge 0$, $h\ge 1$ for compact manifolds. For example, in \cite{Fraser-Li2014}, Fraser--Li obtained compactness theorems for embedded free-boundary minimal surfaces in compact $3$-manifolds with nonnegative Ricci curvature and strictly convex boundary. See also \cites{HangWang2009, Wang2021}.


Throughout the paper, we denote by $\nu$ the outward unit normal to $\partial M$ and use the convention
\[
        h(X,Y)=\langle \nabla_X\nu,Y\rangle,
        \qquad X,Y\in T\partial M,
\]
for the second fundamental form of the boundary. With this convention, the standard Euclidean ball has $h>0$. After rescaling the metric, we assume $h\ge 1$.

Our main result confirms Conjecture~\ref{conj:Li}.

\begin{theorem}\label{thm:main}
Let $(M^n,g)$ be a complete Riemannian $n$-manifold with nonempty smooth
boundary $\partial M$. Suppose that
\[
        \Ric_g\geq 0,
        \qquad
        h\geq 1.
\]
Then $M$ is compact. Consequently, $\pi_1(M)$ is finite.
\end{theorem}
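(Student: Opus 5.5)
We argue by contradiction: assume $M$ is noncompact. The abstract indicates the route, so we plan to build a positive proper harmonic function with Neumann boundary condition and extract a contradiction from a monotone quantity attached to its level sets.

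\textbf{Step 1 (reduction to $\partial M$ noncompact).} First observe that if $\partial M$ were compact and $M$ noncompact, then $M$ contains a ray $\gamma$ from $\partial M$ realizing distance to $\partial M$. A Riccati comparison for the distance function $r=\dist(\cdot,\partial M)$, using $\Ric\ge0$ and mean curvature $H\ge n-1$ on $\partial M$ (the boundary is mean convex toward the outside, so $\Delta r\le -(n-1)/(1-r)$ in the appropriate sign convention), shows $r\le 1$ on $M$. This is the Heintze--Karcher/Kasue type bound $\mathrm{inrad}(M)\le 1$. In particular $\partial M$ is noncompact and unbounded whenever $M$ is. The same bound gives that every point of $M$ lies within distance $1$ of $\partial M$.

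\textbf{Step 2 (Neumann Green's function / proper harmonic function).} On a noncompact $M$ with $\Ric\ge0$ and convex boundary, the Neumann heat kernel satisfies Li--Yau type Gaussian bounds, since convexity of $\partial M$ makes the Li--Yau gradient estimate go through. Using volume growth, we distinguish two cases. In the nonparabolic case we take the minimal positive Neumann Green's function $G(o,\cdot)$. In the parabolic case we use a Neumann-harmonic exhaustion function. Either way we obtain $u>0$ with $\Delta u=0$ away from a pole, $\partial_\nu u=0$ on $\partial M$, and $u$ proper, for instance $b=G^{1/(2-n)}$ comparable to distance in the nonparabolic case. We then carry out a Colding-type analysis on the level sets $\{b=t\}$, which are compact hypersurfaces meeting $\partial M$ orthogonally because of the Neumann condition.

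\textbf{Step 3 (monotone quantity with a boundary term).} Define
\[
F(t)=t^{1-n}\int_{\{b=t\}}|\nabla b|^{p}
\]
(or the analogous $\beta$-weighted version). Differentiate using the Bochner formula $\tfrac12\Delta|\nabla b|^2=|\Hess b|^2+\langle\nabla b,\nabla\Delta b\rangle+\Ric(\nabla b,\nabla b)$ and integrate by parts. Because $\partial_\nu u=0$, the boundary contributions reduce to $\partial_\nu|\nabla u|^2=-2h(\nabla u,\nabla u)\le -2|\nabla u|^2$. Thus the boundary enters with a strictly favorable sign, weighted by $h\ge1$. The plan is to show that $F'(t)\le -c\,t^{-n}\int_{\{b=t\}\cap\partial M}|\nabla b|^{p}$ plus nonpositive terms. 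Here $F$ is bounded above and below (the upper bound from the gradient estimate, the lower bound from the Green's function asymptotics and Step 1 controlling the geometry), while the boundary portion of the level sets is large because $M$ lies in a $1$-neighborhood of $\partial M$. Integrating in $t$ then yields $\int^\infty$ of a quantity comparable to $t^{-1}$, which diverges, giving a contradiction.

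\textbf{Main obstacle.} The crux is Step 3: making the boundary term quantitatively dominant. We must show that the $(n-2)$-dimensional set $\{b=t\}\cap\partial M$ carries a definite fraction of the flux, using Step 1 and Li--Yau comparison of $|\nabla b|$ near the boundary. We must also handle regularity of level sets at the corner where they meet $\partial M$. If the nonparabolic case resists, we would first try the parabolic case, or pass to $M\times\R^k$ to force nonparabolicity, since the product keeps $\Ric\ge0$ and $h\ge0$ but loses uniform convexity in the $\R^k$ directions; this would need care.

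\textbf{Finiteness of $\pi_1$.} Once compactness is proved, the universal cover $\tilde M$ satisfies the same hypotheses and is complete. Hence $\tilde M$ is compact, and $\pi_1(M)$ is finite.
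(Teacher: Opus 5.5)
Your framework is the paper's: contradiction, the parabolic/nonparabolic split, a Neumann Evans potential resp.\ the minimal Neumann Green's function with $b=G^{1/(2-n)}$, the favorable boundary term $\partial_\nu|\nabla b|^2=-2h(\nabla b,\nabla b)$, and the inradius bound $\operatorname{dist}(\cdot,\partial M)\le 1$. But the step that produces the contradiction is missing, and your proposed mechanism would not work as stated.

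\textbf{The boundary-flux step.} You need a pointwise-in-$t$ statement that $\partial\Sigma_t=\Sigma_t\cap\partial M$ carries a definite fraction of the flux, roughly $\int_{\partial\Sigma_t}|\nabla b|^p\gtrsim t^{n-1}$. The inradius bound says nothing about how $|\nabla b|$ on $\partial M$ compares with $|\nabla b|$ inside $\Sigma_t$. Li--Yau/Cheng--Yau only give upper bounds on $|\nabla\log G|$, so they do not help either. An inequality $\int_M f^2\lesssim\int_{\partial M}f^2$ is false without a gradient term, so control of $\nabla|\nabla b|$ is indispensable. The paper gets it from the interior Hessian energy ($\int|\Hess u|^2$, resp.\ $\int b^{-n}|B|^2$ with $B=\Hess b^2-2|\nabla b|^2g$), which your plan discards. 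It then combines this with the boundary energy through the trace Poincar\'e inequality
\[
\int_M f^2\,\di V\le\frac2n\int_{\partial M}f^2\,\di\sigma+\frac{n+1}{2n^2}\int_M|\nabla f|^2\,\di V .
\]
This is proved by needle decomposition along $\operatorname{dist}_{\partial M}$, using $l_\alpha\le 1$ and the $\mathrm{CD}(0,n)$ bound $h_\alpha(t)\le(1-t)^{n-1}h_\alpha(0)$. It is applied to $f=|\nabla\phi|$ with $\phi=\chi(u)u$ (resp.\ $\phi_R=\eta_R(b)b^{(3-n)/2}$, or $\eta_R(b)\log b$ if $n=3$). It then bounds the bulk energy $\int_{\{u\ge T\}}|\nabla u|^2$ (resp.\ $\int b^{1-n}|\nabla b|^2$), which the flux identity forces to diverge. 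This is the rigorous form of your heuristic that $M$ lies in a $1$-neighborhood of $\partial M$.

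\textbf{The missing dichotomy.} Bochner does not give a first-order inequality $F'\le-(\dots)$; the identity is second order. Only $t^{3-n}A'(t)$ (resp.\ $A'(t)$ in the parabolic case) is nondecreasing. Concluding $A'\le 0$ needs $\lim_{t\to\infty}t^{3-n}A'(t)\le 0$, which is not known a priori. The paper handles this with a dichotomy you omit:
\begin{enumerate}
\item If the weighted derivative is eventually positive, Cauchy--Schwarz (plus refined Kato in the parabolic case) gives $F\,\di A\le 2A\,\di F$. An ODE lemma then forces $A(t)\ge ct^{2n-4}$ (resp.\ $ct^{(2n-2)/(n-2)}$), contradicting $A\le C$ (resp.\ $Ct^2$) from Cheng--Yau and the flux identity.
\item Otherwise the limit is $0$, and only then do the finite energies follow. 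Even so, what you get is $-t^{3-n}A'(t)\ge 2\int_t^\infty s^{4-2n}\bigl(\int_{\partial\Sigma_s}|\nabla b|\,\di\ell\bigr)\di s$. This is an integrated boundary term, not one on $\partial\Sigma_t$.
\end{enumerate}

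\textbf{Smaller points.} The parabolic case needs its own unweighted functional $\int_{\Sigma_t}|\nabla u|^3$ with constant flux. The $M\times\R^k$ fallback destroys both $h\ge1$ and the inradius bound, so it is not viable.
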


The compactness conclusion is the main part of the theorem. The finiteness of $\pi_1(M)$ follows by passing to the universal cover. Indeed, the universal cover of $M$ satisfies the same assumptions in the main theorem, hence is also compact. Then it follows that $\pi_1(M)$ is finite.

\subsection{Strategy and Proof outline.} The motivation for our proof comes from level set methods in geometric
analysis. 
In \cites{Colding2012,ColdingMinicozzi2014,ColdingMinicozzi2013Proceedings,ColdingMinicozzi2014CVPDE,ColdingMinicozzi2014Inventiones,Colding-Minicozzi2025}, Colding--Minicozzi developed monotonicity formulas along level sets of Green's functions on manifolds with nonnegative Ricci curvature, with applications to the asymptotic cone of Ricci flat manifolds. Munteanu--Wang also used Green function level
sets in their study of $3$-manifolds with scalar curvature lower
bounds \cites{MunteanuWang2023, MunteanuWang2025, MunteanuWang2026}. In \cite{BrayKazarasDemetreKhuri2022}, Bray--Kazaras--Khuri--Stern used harmonic functions to study scalar curvature and the ADM mass;
see also the related works
\cites{AgostinianiMazzieriOronzio2024,AgostinianiMantegazzaMazzieriOronzio2023} on this topic. 
These works illustrate that a well-chosen ``potential'' can serve as a substitute for the distance function, and that its
level sets can encode information about the geometry at infinity. 

The present paper is also inspired by our previous work \cites{YanZhu2026AsymptoticLimitsII,YanZhu2026Topology}, which is developed based on the ideas in \cites{Zhu2025,YanZhuuniqueness}. We observed through the previous work that one needs to treat separately the parabolic, nonparabolic dichotomy.

\begin{definition}
    Let $(M^n,g)$ be a complete noncompact Riemannian manifold, $M$ is parabolic if it admits no positive Green's function. Otherwise, $M$ is nonparabolic. 
\end{definition}
In the parabolic case, we constructed monotone quantities parallel to those constructed by Colding--Minicozzi in the nonparabolic case, and used them to study the topology of $3$-manifolds with nonnegative scalar curvature. In \cite{YanZhu2026Topology}, we studied complete $3$-manifolds with nonnegative scalar curvature through exhaustions by level sets of positive harmonic functions or Green distance functions. One of the main observations there, as well as in \cite{Colding-Minicozzi2025}, is that the growth rate of a monotone level set integral encodes geometric information about an end, which in turn controls the topology of the end in dimension $3$ via the Gauss--Bonnet theorem.

In the present paper, we develop this point of view to manifolds with boundary. The potentials used here are required to be compatible with the boundary geometry: we impose the Neumann boundary condition. More precisely, in the parabolic case, we use a proper positive harmonic function satisfying the Neumann boundary condition outside a compact set, while in the nonparabolic case, we use the positive Neumann Green function and its associated Green distance function.

\begin{remark}
We emphasize that the noncompactness is essential in our argument. Indeed, the construction of the Evans potential, the decay estimate for the Neumann Green's function and our contradicting argument all rely crucially on the noncompactness, which is imposed as the hypothesis for contradiction. This means that our approach does not provide insights into Wang's conjecture on the (sharp) upper bound of the boundary area.
\end{remark}

We now describe the main ideas of the proof. We assume for the sake of contradiction that $M$ is complete and noncompact. The argument to derive a contradiction is divided according to the dichotomy between the parabolic and nonparabolic
cases.

We first explain the parabolic case. The starting point is the construction of a proper positive harmonic function satisfying the Neumann boundary condition
outside a compact set. More precisely, by
Proposition~\ref{mixed-harmonic-function}, if $M$ is parabolic, then for every nonpolar compact set $K\subset M$ there exists a positive harmonic function $u$ on $M\setminus K$ such that
\[
        \Delta u=0
        \quad\text{in } M\setminus K,
        \qquad
        \partial_\nu u=0
        \quad\text{on } \partial M\setminus K,
\]
with $u=0$ on $K$ and $u(x)\to\infty$ as $x\to\infty$. Since all positive level sets of $u$ stay away from $K$, one can use them to probe the geometry of the end.

For positive $t$, we consider the level-set functional
\[
        A(t)=\int_{\{u=t\}}|\nabla u|^3\,\di \sigma.
\]
The Neumann condition implies that $\nabla u$ is tangent to $\partial M$ along the boundary. Hence the level sets of $u$ meet $\partial M$ orthogonally, and the boundary contribution in the variation formula can be written in terms of the second fundamental form of $\partial M$. The key monotonicity identity is Theorem~\ref{thm:free-boundary-monotonicity-parabolic}: for positive regular values $t<s$,
\[
A'(s)-A'(t)
=
2\int_{\{t\leq u\leq s\}}
\left(
|\nabla^2u|^2+\Ric(\nabla u,\nabla u)
\right)\,\di V  
+
2\int_{\partial M\cap\{t\leq u\leq s\}}
h(\nabla u,\nabla u)\,\di \sigma .
\]
Thus the assumptions $\Ric\geq0$ and $h\geq0$ imply that $A'(t)$ is monotone nondecreasing. This is the first essential use of the convexity of the boundary.

The stronger assumption $h\geq1$ is used in two further ways. First, the
boundary term in the monotonicity formula quantitatively controls the boundary energy
\[
        \int_{\partial M\cap\{u\geq T\}}|\nabla u|^2\,\di \sigma.
\]
Second, together with $\Ric\geq0$, it yields the $L^2$ trace Poincar\'e inequality proved in Theorem~\ref{Trace Poincare inequality}:
\[
        \int_M f^2\,\di V
        \leq
        \frac{2}{n}\int_{\partial M}f^2\,\di \sigma
        +
        \frac{n+1}{2n^2}\int_M|\nabla f|^2\,\di V.
\]
This inequality converts the finite energy estimates obtained from the monotonicity formula into a contradiction.

The final contradiction in the parabolic case is obtained by considering the sign of $A'(t)$ at infinity. If $A'(t)>0$ at some sufficiently large level, then we derive a differential inequality that forces the superquadratic growth
\[
        A(t)\geq c\,t^{\frac{2n-2}{n-2}}
\]
for all sufficiently large $t$, by Lemma \ref{ODE-lemma}. On the other hand, the Cheng--Yau gradient estimate gives the quadratic upper bound
\[
        A(t)\leq C t^2,
\]
which is impossible when $n\geq3$.

It remains to rule out the alternative that $A'(t)\leq0$ for all sufficiently large $t$. Since $A'(t)$ is monotone nondecreasing and $A(t)\geq0$, it follows $\lim_{t\to \infty} A'(t)=0$. Letting the outer level tend to infinity in Theorem~\ref{thm:free-boundary-monotonicity-parabolic} gives finite interior Hessian energy and finite boundary energy on the end:
\[
        \int_{\{u\geq T\}}|\nabla^2u|^2\,\di V
        +
        \int_{\partial M\cap\{u\geq T\}}|\nabla u|^2\,\di \sigma
        <\infty .
\]
Applying the trace Poincar\'e inequality Theorem \ref{app:trace-poincare}  to a cutoff of $u$ then implies
\[
        \int_{\{u\geq T\}}|\nabla u|^2\,\di V<\infty.
\]
This contradicts the coarea formula and the flux identity:
\[
        \int_{\{u\geq T\}}|\nabla u|^2\,\di V
        =
        \int_T^\infty
        \left(
        \int_{\{u=t\}}|\nabla u|\,\di \sigma
        \right)\,\di t = \infty.
\]
Therefore the parabolic noncompact case is impossible.

The nonparabolic case follows the same strategy, with two essential changes. First, the Evans potential is replaced by the minimal positive Neumann Green's function. More precisely, by Proposition~\ref{prop:existence-properness-Neumann-Green}, after fixing a pole $p\in \mathring{M}$, there exists a minimal positive Neumann Green function $G=G_N(p,\cdot)$, and the associated Green distance
\[
        b=G^{\frac{1}{2-n}}
\]
is proper under the assumptions $\Ric\geq0$ and $h\geq0$. 

Second, in nonparabolic setting, the model is the Euclidean space $\R^n$, where $b$ is just the distance function to the pole. The natural symmetry for level sets of $b$ is scaling. Inspired by this, we define
\[
        A(t)=t^{1-n}\int_{\{b=t\}}|\nabla b|^3\,\di \sigma.
\]
The natural monotone quantity is not $A'(t)$ itself, but the weighted derivative
\[
        F(t):=t^{3-n}A'(t).
\]
The corresponding monotonicity formula is Theorem~\ref{thm:weighted-second-variation-nonparabolic}. It contains a nonnegative bulk term involving the trace-free tensor
\[
        B=\nabla^2 b^2-2|\nabla b|^2g
\]
and a boundary term controlled by the second fundamental form of $\partial M$. Thus $\Ric\geq0$ and $h\geq0$ imply that $F(t)$ is monotone nondecreasing, while the stronger assumption $h\geq1$ gives the quantitative boundary control needed later.

The contradiction argument then parallels the parabolic case. If $F(t)>0$ at some sufficiently large level, then the weak weighted growth lemma, Lemma~\ref{lem:weighted-growth-nonparabolic}, forces
\[
        A(t)\geq c\,t^{2n-4}
\]
for all large $t$. This contradicts the Cheng--Yau gradient estimate for the Green distance and the Green flux identity (\ref{conservative law}), which together give a uniform upper bound for $A(t)$. If instead $F(t)\leq0$ at infinity, then the monotonicity formula yields finite scale-invariant energy estimates for $B$ and for the boundary term. Combining these estimates with the trace Poincar\'e inequality contradicts the divergence (\ref{linear-divergence}) forced by the Green flux identity (\ref{conservative law}). Therefore the nonparabolic noncompact case is also impossible.

The paper is organized as follows.  In Section~\ref{sec:parabolic}, we treat the parabolic case.  We first construct the Evans potential with Neumann boundary condition, then derive the level-set monotonicity formula and complete the contradiction argument. In Section~\ref{sec:nonparabolic}, we treat the nonparabolic case using the minimal Neumann Green function and the associated Green distance.  Finally, in Appendix~\ref{app:trace-poincare}, we prove the trace Poincar\'e inequality used in both cases.

\subsection*{Acknowledgments}

The authors are grateful to Xianzhe Dai, Han Hong, Martin Li, Jian Wang, Xiaodong Wang and Guofang Wei for their interests in this problem and for helpful comments. The authors also thank the organizing committee of the conference \emph{When Analysis Meets Geometry} and UC Santa Cruz department of Mathematics, where a large part of this work was completed, for their hospitality and support. Z.Y. and X.Z. are supported by AMS--Simons Travel Grants.

\section{Parabolic case}\label{sec:parabolic}
In this section, we verify Li's conjecture in the parabolic case.

\begin{proposition}\label{prop:Parabolic confirmation of Li's conjecture}
    Let $(M^n,g)$ be a complete Riemannian $n$-manifold with nonempty boundary $\partial M$ and satisfies
    \begin{equation}
        \Ric_g\ge 0, \quad h\ge 1.
    \end{equation}
    Then $(M,g)$ cannot be both noncompact and parabolic.
\end{proposition}

The rest of this section is devoted to the proof of Proposition~\ref{prop:Parabolic confirmation of Li's conjecture}. We will prove by contradiction, so in the following we always assume that $(M,g)$ is noncompact and parabolic.
The proof uses a monotone quantity constructed from a proper positive harmonic function satisfying the Neumann boundary condition outside a sufficiently large compact set. We first establish the existence of such a harmonic function.

\subsection{Existence of proper positive Neumann harmonic function}
The function we need to construct is known in the literature as an Evans potential. We shall use an existence theorem of Hansen--Netuka \cite{Hansen-Netuka2013}, which applies in a very general potential-theoretic setting.

\begin{proposition}\label{mixed-harmonic-function}
Let $(M^n,g)$ be a complete noncompact Riemannian manifold with boundary $\partial M$. Suppose that $M$ is parabolic. Then, for every nonpolar compact set $K\subset M$, there exists a positive harmonic function $u$ on $M\setminus K$ such that $ \partial_\nu u=0$ on $\partial M\setminus K$, and such that $u$ extends continuously to $K$ by setting $u=0$ on $K$. Moreover,
\[
    u(x)\to \infty \quad \text{as } x\to \infty.
\]
In particular, $u$ is proper.
\end{proposition}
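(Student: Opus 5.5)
The plan is to place the Neumann problem on $M$ inside the abstract framework of Hansen--Netuka \cite{Hansen-Netuka2013}, who prove that on a balayage (harmonic) space that is \emph{recurrent}, i.e.\ admits no positive potentials, every nonpolar compact set $K$ carries an Evans potential: a function harmonic off $K$, vanishing (continuously) on $K$, and tending to $+\infty$ at the Alexandroff point. So the work is to build the right harmonic space and check its hypotheses.

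First I will define the harmonic sheaf. On an open set $U\subset M$ (open in the relative topology, so $U$ may meet $\partial M$), a function is harmonic if it is a weak solution of $\int_U \langle\nabla u,\nabla\varphi\rangle\,\di V=0$ for all $\varphi\in C_c^\infty(U)$; test functions need not vanish on $\partial M$. Such functions satisfy $\Delta u=0$ in $U\cap\mathring M$ and $\partial_\nu u=0$ on $U\cap\partial M$. By elliptic regularity for the Neumann problem (or reflection across $\partial M$ in Fermi coordinates, which gives a divergence-form operator with Lipschitz coefficients), they are smooth up to the boundary and satisfy Harnack inequalities and the Moser/De Giorgi estimates uniformly up to $\partial M$. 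These are the ingredients for the Bauer axioms: the sheaf property, a base of regular sets (small coordinate balls and half-balls with Dirichlet data on the interior part of the boundary and Neumann data on the part in $\partial M$), and the Harnack/Brelot convergence property. Equivalently, this is the harmonic space of the regular Dirichlet form $\mathcal E(f,f)=\int_M|\nabla f|^2\,\di V$ on $L^2(M)$ with domain $W^{1,2}(M)$, whose process is reflecting Brownian motion; Hansen--Netuka's setting covers such spaces.

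Next I will identify recurrence with parabolicity. The definition says $M$ is parabolic if it has no positive (Neumann) Green function, so I need that this is equivalent to the absence of positive potentials in the harmonic space. I will use the standard exhaustion construction: solve for the mixed Green function $G_{\Omega_i}(p,\cdot)$ on domains $\Omega_i\uparrow M$ with Dirichlet condition on $\partial\Omega_i\cap\mathring M$ and Neumann condition on $\partial M\cap\Omega_i$. These increase in $i$. Either they converge, and the limit is a positive Green function (nonparabolic), or they diverge locally uniformly by Harnack. In the second case every positive superharmonic function is harmonic-dominated, so there are no potentials, and also the capacity of every compact set relative to infinity is zero. This is the recurrence hypothesis of \cite{Hansen-Netuka2013}. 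Here I will also check that $\mathcal E$ is irreducible, since $M$ is connected, and that "nonpolar" in their sense means positive capacity, which holds for any $K$ with nonempty interior.

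The main obstacle is this verification that the Neumann/reflecting structure genuinely satisfies the axioms near $\partial M$, in particular the regularity of boundary points relative to the mixed problem and the boundary Harnack estimates. I would handle it via the Dirichlet-form route. Since $\mathcal E$ is strongly local and regular, $M$ is complete, and curvature is locally bounded, the local Sobolev and Poincaré inequalities up to the boundary hold on relatively compact charts. Sturm's theory then gives parabolic Harnack inequalities and continuous harmonic functions, which yields a $\mathfrak P$-harmonic space with a base of regular sets. Once that is done, Hansen--Netuka yields $u\ge 0$, harmonic on $M\setminus K$, equal to $0$ on $K$, with $u\to\infty$ at infinity. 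The strong maximum principle makes $u>0$ off $K$, and the regularity theory above upgrades it to smooth with $\partial_\nu u=0$ on $\partial M\setminus K$. Properness follows because $u\to\infty$ at infinity and $u$ is continuous.
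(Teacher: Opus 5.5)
This is essentially the paper's proof: you, like the paper, make Neumann-harmonic functions a Brelot harmonic space and invoke Hansen--Netuka, using interior balls and boundary half-balls with the mixed Dirichlet--Neumann problem as regular sets and the Harnack principle for the convergence axiom; your explicit check that "no positive Green function" means "no positive potentials" is a step the paper leaves implicit. One slip: in your Dirichlet-form paragraph you say Sturm's theory yields a $\mathfrak P$-harmonic space, but under the recurrence hypothesis $M$ has no positive potentials and so is not $\mathfrak P$-harmonic (only its relatively compact open subsets are), and what actually has to be verified for Hansen--Netuka is just that $(M,\mathcal H_N)$ is a Brelot harmonic space.
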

\begin{remark}
    Removing a compact set is necessary for the existence of a unbounded harmonic function in the parabolic case. For example, take the Euclidean half plane $\R^2_+$. It follows from the Schwarz reflection principle that a Neumann harmonic function on $\R^2_+$ extends to a harmonic function on $\R^2$ and hence a constant by Liouville theorem.
\end{remark}
Recall that a compact set $K$ is called nonpolar if it has positive capacity, $\mathrm{Cap}(K)>0$, where
\[
\mathrm{Cap}(K)
=
\inf\left\{
\|f\|_{W^{1,2}(M)}^2:
f\in W^{1,2}(M),\ 
f\ge 1 \text{ in an open neighborhood of } K,\ 
f\ge 0 \text{ a.e.}
\right\}.
\]
 Although Hansen--Netuka do not explicitly prescribe the value of the Evans potential on $K$, it follows directly from the construction in the proof of \cite{Hansen-Netuka2013}*{Theorem 3.1} that the resulting function may be extended by zero on $K$.

It suffices to verify that a Riemannian manifold with boundary, equipped with the sheaf of Neumann harmonic functions, satisfies the hypotheses of \cite{Hansen-Netuka2013}*{Theorem~1.1}. In fact, Hansen--Netuka stated the existence of Evans potential for parabolic manifolds in \cite{Hansen-Netuka2013}*{Corollary 1.2}. However, they did not specify if such manifolds have boundary or what the boundary condition is.

\begin{proof}
We define the sheaf of Neumann harmonic functions $\mathcal H_N$ as follows. For each open set $U\subset M$, let
\[
  \mathcal H_N(U)
  =
  \left\{
  u\in C^2(U):
  \Delta u=0 \text{ in $U$}, \quad \partial_\nu u=0 \text{ on $U\cap\del M$}
  \right\}.
\]
It is straightforward to check that $\mathcal H_N$ defines a sheaf and that it contains all constant functions. By \cite{Hansen-Netuka2013}*{Theorem~1.1}, it remains to verify that $(M,\mathcal H_N)$ is a Brelot harmonic space. Following \cite{Hansen-Netuka2013}*{Section~2}, this amounts to checking two properties: first, that the open sets which are regular for the Dirichlet problem form a topological basis for $M$; and second, that the Brelot convergence property holds.

Recall that, in the present setting, an open set $U\subset M$ is said to be regular for the Dirichlet problem if, for every continuous function
$f\in C^0(\partial_M U)$, where
\[
    \partial_M U:=\overline U\setminus U
\]
denotes the boundary of $U$ relative to the topological space $M$, the Dirichlet problem
\begin{equation}\label{eq:Local-Dirichlet-Problem}
    \Delta u=0 \quad \text{in } U,\qquad
    u=f \quad \text{on } \partial_M U
\end{equation}
admits a solution $u\in \mathcal H_N(U)$ which extends continuously to $\overline U$ and satisfies $u=f$ on $\partial_M U$.

It suffices to verify this property for a basis of sufficiently small neighborhoods of each point $p\in M$. If $p\in \mathring{M}$, we choose $r>0$ sufficiently small so that $B_g(p,r)\cap \partial M=\emptyset$.
Then, for $U=B_g(p,r)$, problem \eqref{eq:Local-Dirichlet-Problem} is the ordinary local Dirichlet problem for the Laplace equation on a smooth domain. Hence it is solvable by the standard elliptic theory.

It remains to consider the case $p\in \partial M$. Choose $r>0$ sufficiently small and set $U=B_g(p,r)\cap M$. We decompose the boundary of $U$ into the Dirichlet part and the Neumann part by
\[
    \Gamma_D:=\partial_M U=\overline U\setminus U,
    \qquad
    \Gamma_N:=U\cap \partial M.
\]
The local Dirichlet problem for the sheaf $\mathcal H_N$ is therefore equivalent to the mixed boundary value problem
\[
\begin{cases}
    \Delta u=0 & \text{in } U\cap M^\circ,\\
    u=f & \text{on } \Gamma_D,\\
    \partial_\nu u=0 & \text{on } \Gamma_N.
\end{cases}
\]
 Since $r$ can be chosen sufficiently small, $U$ is a smooth half-ball in $\R^n$ up to a change of coordinates, and the mixed Dirichlet--Neumann problem above is solvable by the standard local elliptic theory. Moreover, the solution is continuous up to $\Gamma_D$ and assumes the prescribed boundary values there. Hence such neighborhoods are regular for the Dirichlet problem. Consequently, regular open sets form a topological basis of $M$.

Finally, we verify the Brelot convergence property. This is exactly \cite{GrigorHeatkernel}*{Corollary 13.13}, which is called the Harnack principle there. Indeed, the local harnack inequality \cite{GrigorHeatkernel}*{Theorem 13.10} ensures locally uniform $C^\infty$ convergence for increasing sequence of harmonic functions that are uniformly finite at a point. 


Consequently, $(M,\mathcal H_N)$ is a Brelot harmonic space.
\end{proof}

\subsection{Monotone functional}

We now turn to the monotonicity formula associated with the Evans potential constructed above. 

Let $(M^n,g)$ be a complete Riemannian manifold with nonempty boundary
$\partial M$, and let $K\subset M$ be a nonpolar compact set. Suppose that $u\in C^0(M)\cap C^\infty(M\setminus K)$
is a proper positive harmonic function on $M\setminus K$ such that
\[
    \Delta u=0 \quad \text{in } M\setminus K,
    \qquad
    \partial_\nu u=0 \quad \text{on } \partial M\setminus K,
\]
and such that $u=0$ on $K$. For a positive regular value $t$ of $u$, set
\[
    \Sigma_t=\{u=t\},
    \qquad
    \mathbf n:=\frac{\nabla u}{|\nabla u|},
\]
and define
\[
    A(t):=\int_{\Sigma_t}|\nabla u|^3\,\di \sigma .
\]

Since the harmonic function $u$ is constructed only on the exterior domain $M\setminus K$ and is extended by zero on $K$, all computations below are performed on positive level sets of $u$, which stay away from $K$. 

\begin{theorem}\label{thm:free-boundary-monotonicity-parabolic}
For any two positive regular values $0<t<s$, it holds 
\begin{align}\label{eq:derivative-of-A'-parabolic}
A'(s)-A'(t)
=
2\int_{\{t\le u\le s\}}
\left(
|\Hess u|^2+\Ric(\nabla u,\nabla u)
\right)\,\di V 
+
2\int_{\partial M\cap \{t\le u\le s\}}
h(\nabla u,\nabla u)\,\di \sigma.
\end{align}
In particular, if $\Ric_g\ge 0, h\ge 0$,
then $A'(t)$ is monotone nondecreasing on the set of positive regular values. If moreover $ \lim_{t\to\infty}A'(t)=0$,
then $A'(t)\le 0$ for all positive regular values $t$. Consequently, $A(t)$ is monotone nonincreasing on positive regular values. In particular, $A(t)$ is positive and bounded above.
\end{theorem}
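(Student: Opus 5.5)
I will derive a closed-form expression for $A'(t)$, so that the difference $A'(s)-A'(t)$ becomes a divergence integral over the slab $\{t\le u\le s\}$. By the coarea formula and $\Delta u=0$, on regular values $A(t)=\int_{\{u\ge t\}}\ldots$ can be differentiated. Cleaner: on $\Sigma_t$ the outward unit normal of the superlevel set $\{u\ge t\}$ is $-\mathbf n$. I write
\[
A(t)=\int_{\Sigma_t}|\nabla u|^2\langle\nabla u,\mathbf n\rangle\,\di\sigma .
\]
Let $t<s$. The boundary of the slab $\Omega=\{t\le u\le s\}$ consists of $\Sigma_s$, $\Sigma_t$, and $\partial M\cap\Omega$. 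On $\partial M\cap\Omega$ the vector field $|\nabla u|^2\nabla u$ has zero normal flux, because $\partial_\nu u=0$. The divergence theorem therefore gives
\[
A(s)-A(t)=\int_\Omega \operatorname{div}(|\nabla u|^2\nabla u)\,\di V=\int_\Omega\langle\nabla|\nabla u|^2,\nabla u\rangle\,\di V .
\]
Using the coarea formula, I obtain
\[
A'(t)=\int_{\Sigma_t}\langle\nabla|\nabla u|^2,\mathbf n\rangle\,\di\sigma .
\]
This holds at regular values, where the level sets vary smoothly. Note that $\Sigma_t$ is compact because $u$ is proper.

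Next I apply the divergence theorem on $\Omega$ to the vector field $\nabla|\nabla u|^2$. This gives
\[
A'(s)-A'(t)=\int_\Omega\Delta|\nabla u|^2\,\di V+\int_{\partial M\cap\Omega}\partial_\nu|\nabla u|^2\,\di\sigma .
\]
By the Bochner formula together with $\Delta u=0$, the bulk integrand equals $2(|\Hess u|^2+\Ric(\nabla u,\nabla u))$.

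The key step is the boundary term. Since $\partial_\nu u=0$ on $\partial M$, the gradient $\nabla u$ is tangent to $\partial M$ there. I compute
\[
\partial_\nu|\nabla u|^2=2\Hess u(\nabla u,\nu).
\]
I then differentiate the identity $\langle\nabla u,\nu\rangle=0$ along the tangent direction $X=\nabla u$:
\[
0=X\langle\nabla u,\nu\rangle=\Hess u(X,\nu)+\langle\nabla u,\nabla_X\nu\rangle=\Hess u(\nabla u,\nu)+h(\nabla u,\nabla u).
\]
With the stated sign convention this yields $\partial_\nu|\nabla u|^2=-2h(\nabla u,\nabla u)$, which is the wrong sign for a monotonicity formula. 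I therefore expect the sign bookkeeping to be the main obstacle. The fix is to recheck the orientation. On $\Sigma_s$ the outward normal of $\Omega$ is $+\mathbf n$, while on $\Sigma_t$ it is $-\mathbf n$, so $A'(s)-A'(t)$ is indeed the flux through $\partial\Omega$ minus the contribution of $\partial M\cap\Omega$. Hence
\[
A'(s)-A'(t)=\int_\Omega\Delta|\nabla u|^2\,\di V-\int_{\partial M\cap\Omega}\partial_\nu|\nabla u|^2\,\di\sigma,
\]
and the boundary integrand becomes $+2h(\nabla u,\nabla u)$, as claimed. Smoothness up to $\partial M$ near the slab follows from local elliptic regularity for the Neumann problem, since the slab stays away from $K$. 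Transversality of $\Sigma_t$ to $\partial M$ at regular values holds because $\nabla u$ is tangent to $\partial M$, so $\Omega$ is a manifold with corners and the divergence theorem applies.

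The consequences are then routine. If $\Ric\ge0$ and $h\ge0$, the right-hand side is nonnegative, so $A'$ is nondecreasing on regular values. Suppose moreover that $A'(s)\to0$ as $s\to\infty$ along regular values, which form a set of full measure by Sard. Then for any regular $t$ we get $A'(t)\le\lim_{s\to\infty} A'(s)=0$. Because $A$ is locally absolutely continuous, as it is given by the bulk integral formula above, $A$ is nonincreasing. Finally, $A(t)>0$ because $\Sigma_t$ is nonempty with $\nabla u\neq0$ there. Boundedness then follows from monotonicity, namely $A(t)\le A(t_0)$ for $t\ge t_0$.
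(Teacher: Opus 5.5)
Your proposal is correct and follows the paper's proof: the flux formula $A'(t)=\int_{\Sigma_t}\langle\nabla|\nabla u|^2,\mathbf n\rangle\,\di\sigma$, the divergence theorem for $\nabla|\nabla u|^2$ on the slab (your corrected version, with the minus sign on the $\partial M$ term), Bochner's formula, and $\partial_\nu|\nabla u|^2=-2h(\nabla u,\nabla u)$ obtained by differentiating the Neumann condition. The one minor difference is how you get $A'(t)$: you use the divergence theorem for $|\nabla u|^2\nabla u$ together with the coarea formula, whereas the paper uses the first variation formula with $H_{\Sigma_t}=-\Hess u(\mathbf n,\mathbf n)/|\nabla u|$; your route has the small advantage of making $A$ locally absolutely continuous across critical values, which fully justifies passing from $A'\le 0$ at regular values to the monotonicity of $A$.
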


\begin{proof}
Since we only consider positive regular values $0<t<s$, the slab
\[
    \Omega_{t,s}:=\{t\le u\le s\}
\]
is contained in $M\setminus K$. Hence $u$ is smooth and harmonic on a neighborhood of
$\Omega_{t,s}$, and no boundary contribution from $\partial K$ appears in the following
integration by parts.

The Neumann condition $\partial_\nu u=0$ on $\partial M\setminus K$ implies
\begin{align}\label{tangential-on-the-boundary}
    \langle \nabla u,\nu\rangle=0
    \quad \text{on } \partial M\setminus K .
\end{align}
Thus, at a boundary point of a regular level set $\Sigma_t$, the level set normal $\n$ is tangent to $\partial M$.

We first compute the first derivative of $A(t)$. For a smooth function $f$, the standard
first variation formula for regular level sets gives
\[
\frac{\di}{\di t}\int_{\Sigma_t} f\,\di \sigma
=
\int_{\Sigma_t}
\left(
\frac{\langle \nabla f,\nabla u\rangle}{|\nabla u|^2}
+
\frac{fH_{\Sigma_t}}{|\nabla u|}
\right)\,\di \sigma ,
\]
where $H_{\Sigma_t}$ denotes the mean curvature of $\Sigma_t$ with respect to
$\mathbf n$. Taking $f=|\nabla u|^3$, we obtain
\[
A'(t)
=
\int_{\Sigma_t}
\left(
\frac{\langle \nabla |\nabla u|^3,\nabla u\rangle}{|\nabla u|^2}
+
|\nabla u|^2 H_{\Sigma_t}
\right)\,\di \sigma .
\]
Since
\[
\langle \nabla |\nabla u|^3,\nabla u\rangle
=
3|\nabla u|^3\Hess u(\mathbf n,\mathbf n),
\]
and, using $\Delta u=0$,
\[
H_{\Sigma_t}
=
-\frac{\Hess u(\mathbf n,\mathbf n)}{|\nabla u|},
\]
we get
\begin{align}\label{eq:first-derivative-parabolic}
A'(t)
=
2\int_{\Sigma_t}|\nabla u|\Hess u(\mathbf n,\mathbf n)\,\di \sigma .
\end{align}
Equivalently, since
\[
\langle \nabla |\nabla u|^2,\mathbf n\rangle
=
2\Hess u(\nabla u,\mathbf n)
=
2|\nabla u|\Hess u(\mathbf n,\mathbf n),
\]
we may rewrite \eqref{eq:first-derivative-parabolic} as
\begin{align}\label{eq:first-derivative-parabolic-variant}
A'(t)
=
\int_{\Sigma_t}
\langle \nabla |\nabla u|^2,\mathbf n\rangle\,\di \sigma .
\end{align}

Now let
\[
    \Gamma_{t,s}:=\partial M\cap \{t\le u\le s\}.
\]
The boundary of $\Omega_{t,s}$ consists of
\[
    \partial \Omega_{t,s}
    =
    \Sigma_s\cup \Sigma_t\cup \Gamma_{t,s},
\]
where the outward unit normal is $\mathbf n$ along $\Sigma_s$, $-\mathbf n$ along
$\Sigma_t$, and $\nu$ along $\Gamma_{t,s}$. Applying the divergence theorem to
$\nabla|\nabla u|^2$ gives
\begin{align}\label{eq:laplacian-by-parts}
\int_{\Omega_{t,s}}\Delta |\nabla u|^2\,\di V
&=
\int_{\Sigma_s}\langle \nabla |\nabla u|^2,\mathbf n\rangle\,\di \sigma
-
\int_{\Sigma_t}\langle \nabla |\nabla u|^2,\mathbf n\rangle\,\di \sigma 
+
\int_{\Gamma_{t,s}}\partial_\nu |\nabla u|^2\,\di \sigma .
\end{align}
Using \eqref{eq:first-derivative-parabolic-variant}, this becomes
\[
A'(s)-A'(t)
=
\int_{\Omega_{t,s}}\Delta |\nabla u|^2\,\di V
-
\int_{\Gamma_{t,s}}\partial_\nu |\nabla u|^2\,\di \sigma .
\]
By the Bochner formula and the harmonicity of $u$,
\[
\Delta |\nabla u|^2
=
2|\Hess u|^2+2\Ric(\nabla u,\nabla u).
\]
Therefore
\[
A'(s)-A'(t)
=
2\int_{\Omega_{t,s}}
\left(
|\Hess u|^2+\Ric(\nabla u,\nabla u)
\right)\,\di V
-
\int_{\Gamma_{t,s}}\partial_\nu |\nabla u|^2\,\di \sigma .
\]

It remains to compute the boundary term. Along $\partial M\setminus K$, recall that $\nabla u$ is tangent to $\partial M$ by the Neumann condition. Hence
\[
\partial_\nu |\nabla u|^2
=
2\Hess u(\nu,\nabla u)=-2h(\nabla u,\nabla u).
\]
Substituting this into the previous identity gives
\begin{align*}
A'(s)-A'(t)
&=
2\int_{\Omega_{t,s}}
\left(
|\Hess u|^2+\Ric(\nabla u,\nabla u)
\right)\,\di V 
+
2\int_{\Gamma_{t,s}}
h(\nabla u,\nabla u)\,dA .
\end{align*}
This proves \eqref{eq:derivative-of-A'-parabolic}.

If $\Ric_g\ge 0$ and $h\ge 0$, then $A'(s)-A'(t)\ge 0$
for any positive regular values $0<t<s$. Hence $A'(t)$ is monotone nondecreasing on the set of positive regular values. If, in addition,
\[
    \lim_{t\to\infty}A'(t)=0,
\]
then monotonicity implies $A'(t)\le 0$ for every positive regular value $t$. Therefore
$A(t)$ is monotone nonincreasing on positive regular values. It follows $A(t)$ is nonnegative and bounded above.
\end{proof}

\subsection{Contradiction argument}
We finish the proof of Proposition \ref{prop:Parabolic confirmation of Li's conjecture} in this section. The following lemma will be useful to derive the lower bound for the growth of $A(t)$.

We will only use \eqref{eq:derivative-of-A'-parabolic} in an integral. 
For regular values $t$, set
\[
    F(t):=A'(t)
    =
    \int_{\Sigma_t}
    \langle \nabla |\nabla u|^2,\mathbf n\rangle\,\di \sigma .
\]
Under the assumptions $\Ric\ge 0$ and $h\ge 0$, the identity
\eqref{eq:derivative-of-A'-parabolic} implies that $F$ is nondecreasing on the set of regular values. 
After redefining $F$ on the set of critical values, we regard $F$ as a right-continuous nondecreasing function on $[T,\infty)$. 
Its distributional derivative is therefore a nonnegative Radon measure, denoted by $\di F$. 
All differential inequalities involving $F'$ below are understood in this sense.

\begin{lemma}\label{ODE-lemma}
Let $A:[T,\infty)\to (0,\infty)$ be locally absolutely continuous, and let
$F:[T,\infty)\to \mathbb R$ be a right-continuous nondecreasing function such that
\[
    A'(t)=F(t)
\]
for a.e. $t\ge T$. Suppose that $F(t)>0$ for all $t\ge T$. Assume moreover that, for some constant $C>1$, the inequality
\[
    F(t)^2\,\di t \le C A(t)\,\di F(t)
\]
holds as an inequality of Radon measures on $[T,\infty)$. Then there exists a constant $c>0$ such that
\[
    A(t)\ge c\, t^{\frac{C}{C-1}}
\]
for all sufficiently large $t$.
\end{lemma}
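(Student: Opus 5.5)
The plan is to turn the measure inequality into a differential inequality for $A^{-1/C}F$, or equivalently into a statement that $F\,A^{-1/C}$ is nondecreasing. In the smooth case the model computation runs as follows. We have $F^2\le CAF'$, so
\[
\frac{F'}{F}\ge \frac{1}{C}\frac{F}{A}=\frac{1}{C}\frac{A'}{A}.
\]
Hence $\log F-\frac1C\log A$ is nondecreasing, which gives $F(t)\ge c_0A(t)^{1/C}$ with $c_0=F(T)A(T)^{-1/C}>0$. Then $A'=F\ge c_0A^{1/C}$, so
\[
\frac{d}{dt}A^{1-1/C}\ge (1-\tfrac1C)c_0 .
\]
Integrating, $A(t)^{(C-1)/C}\ge (1-\frac1C)c_0(t-T)$, that is, $A(t)\ge c\,t^{C/(C-1)}$ for large $t$. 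Since $C>1$, the exponent is positive and finite.

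To make this rigorous with $F$ only BV (nondecreasing, right-continuous), I will run the same argument using the Lebesgue–Stieltjes chain rule. $F$ is positive and nondecreasing on $[T,\infty)$, so $F\ge F(T)>0$ there, and $\log F$ is again nondecreasing. For a nondecreasing $F$, one has $\di(\log F)\ge \di F/F$ as measures; at a jump from $F(t^-)$ to $F(t)$, $\log F(t)-\log F(t^-)\ge (F(t)-F(t^-))/F(t)$. Dividing the hypothesis $F^2\,\di t\le CA\,\di F$ by the positive continuous-in-$A$ factor $CAF$ gives
\[
\frac{F}{CA}\,\di t\le \frac{\di F}{F}\le \di(\log F).
\]
Here the precise-representative choice for $F$ in $\di F/F$ is handled by using the right-continuous value $F(t)$, which is the smaller denominator choice and still works. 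Because $A$ is locally absolutely continuous with $A'=F$ a.e., the left side equals $\frac1C\di(\log A)$. Integrating over $[T,t]$ yields $\log F(t)-\frac1C\log A(t)\ge \log F(T)-\frac1C\log A(T)$, which is the bound $F\ge c_0A^{1/C}$ pointwise.

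The second integration only involves the absolutely continuous function $A^{1-1/C}$. Its a.e. derivative is $(1-\frac1C)A^{-1/C}F\ge (1-\frac1C)c_0$, so the conclusion follows by the fundamental theorem of calculus for absolutely continuous functions, taking $c=\frac12\big((1-\frac1C)c_0\big)^{C/(C-1)}$ for $t\ge 2T$, say.

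The main obstacle I expect is the measure-theoretic step: justifying the division of the Radon-measure inequality by $AF$ and the comparison $\di F/F\le\di(\log F)$ in the presence of jumps of $F$. The positivity of $F$ and the continuity and positivity of $A$ make the division legitimate, since we are multiplying a measure inequality by a positive Borel function. The jump comparison is the elementary inequality $\log(b/a)\ge (b-a)/b$ for $b\ge a>0$.
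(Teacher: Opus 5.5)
Your argument is correct and is essentially the paper's: both show that $FA^{-1/C}$ (equivalently $\log F-\frac1C\log A$) is nondecreasing, deduce $F\ge c_0A^{1/C}$, and then integrate $A^{1-1/C}$. The one difference is in the first step. The paper uses the BV product rule $\di(FA^{-1/C})=A^{-1/C}\,\di F-\frac1C A^{-1-\frac1C}F^2\,\di t$, which needs no jump analysis of $\log F$ because $A^{-1/C}$ is absolutely continuous.

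There are two cosmetic slips. At a jump, the right-continuous value $F(t)\ge F(t^-)$ is the larger denominator, not the smaller; the inequality $\log(b/a)\ge (b-a)/b$ that you actually use is still the correct one. Also, for $t\ge 2T$ the constant you can claim is $\bigl(\tfrac12(1-\tfrac1C)c_0\bigr)^{C/(C-1)}$, not $\tfrac12\bigl((1-\tfrac1C)c_0\bigr)^{C/(C-1)}$.
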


\begin{proof}
Since $F$ is nondecreasing and right-continuous, $\di F$ is a nonnegative Radon measure. The assumed measure inequality gives
\[
    F^2\,\di t \le C A\,\di F .
\]
Equivalently,
\[
    A^{-1/C}\,\di F
    \ge
    \frac{1}{C}A^{-1-\frac1C}F^2\,\di t .
\]
We now compute in the sense of distributions. Since $A$ is locally absolutely continuous and $A'=F$ a.e., we have $\di A=F\,\di t$.
Using the product rule for BV functions, we obtain
\[
\begin{aligned}
    \di\left(F A^{-1/C}\right)
    &=
    A^{-1/C}\,\di F
    +
    F\,\di\left(A^{-1/C}\right)  \\
    &=
    A^{-1/C}\,\di F
    -
    \frac{1}{C}F A^{-1-\frac1C}\,\di A  \\
    &=
    A^{-1/C}\,\di F
    -
    \frac{1}{C}A^{-1-\frac1C}F^2\,\di t.
\end{aligned}
\]
By the measure inequality above, this gives
\[
    \di\left(F A^{-1/C}\right)\ge 0.
\]
Hence
\[
    F(t)A(t)^{-1/C}
\]
is nondecreasing on $[T,\infty)$. In particular, for every $t\ge T$,
\[
    F(t)A(t)^{-1/C} \ge F(T)A(T)^{-1/C}.
\]
Set
\[
    c_0:=F(T)A(T)^{-1/C}>0.
\]
Then
\[
    F(t)\ge c_0 A(t)^{1/C}.
\]
Since $A'=F$ a.e., it follows that
\[
    A'(t)\ge c_0 A(t)^{1/C}
\]
for a.e. $t\ge T$.

Therefore,
\[
\frac{\di}{\di t}A(t)^{1-\frac1C}
=
\left(1-\frac1C\right)A(t)^{-\frac1C}A'(t)
\ge
\left(1-\frac1C\right)c_0
\]
for a.e. $t\ge T$. Integrating from $T$ to $t$, we obtain
\[
    A(t)^{1-\frac1C}
    \ge
    A(T)^{1-\frac1C}
    +
    \left(1-\frac1C\right)c_0(t-T).
\]
Thus, for all sufficiently large $t$, there exists $c_1>0$ such that
\[
    A(t)^{1-\frac1C}\ge c_1 t.
\]
Raising both sides to the power $C/(C-1)$ yields
\[
    A(t)\ge c\,t^{\frac{C}{C-1}}
\]
for some constant $c>0$ and all sufficiently large $t$. This proves the lemma.
\end{proof}

\begin{proof} [Proof of Proposition \ref{prop:Parabolic confirmation of Li's conjecture}]
We split our proof into two cases depending on the sign of the quantity $F(t)=A'(t)$. The treatments will be different in each case.

\textbf{Case I.} Suppose that there exists a regular value $t_0$ such that
\[
    F(t_0)=A'(t_0)>0.
\]
Since $F$ is nondecreasing, after replacing $t_0$ by a larger regular value if necessary, we may choose $T\ge t_0$ such that
\[
    F(t)>0
\]
for all $t\ge T$.

We now derive a differential inequality for $F$ in the weak sense. For a regular value $t$, the first variation formula gives
\[
    F(t)=A'(t)
    =
    2\int_{\Sigma_t} |\nabla u|\,\nabla^2u(\mathbf n,\mathbf n)\,\di\sigma .
\]
By the Cauchy--Schwarz inequality,
\[
\begin{aligned}
    F(t)^2
    &\le
    4
    \left(
    \int_{\Sigma_t}|\nabla u|^3\,\di\sigma
    \right)
    \left(
    \int_{\Sigma_t}
    \frac{|\nabla^2u(\mathbf n,\mathbf n)|^2}{|\nabla u|}\,\di\sigma
    \right)  \\
    &=
    4A(t)
    \int_{\Sigma_t}
    \frac{|\nabla^2u(\mathbf n,\mathbf n)|^2}{|\nabla u|}\,\di\sigma .
\end{aligned}
\]
Since
\[
    |\nabla^2u(\mathbf n,\mathbf n)|^2
    \le
    |\nabla |\nabla u||^2,
\]
and the improved Kato inequality for harmonic functions gives
\[
    |\nabla^2u|^2
    \ge
    \frac{n}{n-1}|\nabla |\nabla u||^2,
\]
we obtain
\[
    F(t)^2
    \le
    \frac{4(n-1)}{n}A(t)
    \int_{\Sigma_t}
    \frac{|\nabla^2u|^2}{|\nabla u|}\, \di\sigma .
\]
On the other hand, the monotonicity identity implies, in the sense of Radon measures,
\[
    \di F
    \ge
    2\left(\int_{\Sigma_t}\frac{|\nabla^2u|^2}{|\nabla u|}\,\di \sigma
    \right)\di t .
\]
Indeed, this follows from the coarea formula applied to the nonnegative bulk term
\[
    2\int_{\{t\le u\le s\}} |\nabla^2u|^2\,\di V
\]
appearing in the monotonicity formula, while the Ricci and boundary terms are nonnegative under the assumptions $\Ric\ge0$ and $h\ge0$. Therefore,
\[
    F(t)^2\,\di t
    \le
    \frac{2(n-1)}{n}A(t)\,\di F(t)
\]
as an inequality of Radon measures on $[T,\infty)$. Then Lemma~\ref{ODE-lemma} gives
\begin{equation}\label{eq:parabolic-lower-bound}
    A(t)\ge c\,t^{\frac{C}{C-1}}
    =
    c\,t^{\frac{2n-2}{n-2}}
\end{equation}
for all sufficiently large $t$.
However by the Cheng--Yau gradient estimate on sufficiently large level sets,
\[
    \sup_{\Sigma_t}|\nabla u|^2\le C t^2.
\]
Moreover, by the divergence theorem, the flux
\[
    \int_{\Sigma_t}|\nabla u|\,\di \sigma
\]
is independent of $t$. Therefore
\[
    A(t)
    =\int_{\Sigma_t}|\nabla u|^3\,\di \sigma  
    \le\sup_{\Sigma_t}|\nabla u|^2\int_{\Sigma_t}|\nabla u|\,\di \sigma  
    \le C t^2 .
\]
This contradicts \eqref{eq:parabolic-lower-bound} as $t\to\infty$. 

\textbf{Case II.} We have shown that
\[
    F(t)=A'(t)\le 0
\]
for all sufficiently large regular values $t$. Since $A'(t)$ is monotone nondecreasing and
$A(t)\ge 0$, it follows that
\[
    \lim_{t\to\infty}A'(t)=0.
\]
Indeed, if $\lim_{t\to\infty}A'(t)=\ell<0$, then $A(t)$ would tend to $-\infty$, contradicting the nonnegativity of $A$.

Fix $T>0$ sufficiently large and regular. Letting $s\to\infty$ in the monotonicity identity gives
\[
2\int_{\{u\ge T\}}
\left(
|\Hess u|^2+\Ric(\nabla u,\nabla u)
\right)\,\di V
+
2\int_{\partial M\cap \{u\ge T\}}
h(\nabla u,\nabla u)\,\di A\le -A'(T)<\infty .
\]
In particular, using $\Ric\ge0$ and $h\ge1$, we obtain
\[
    \int_{\{u\ge T\}}|\Hess u|^2\,\di V
    +
    \int_{\partial M\cap\{u\ge T\}}|\nabla u|^2\,d\sigma
    <\infty .
\]

Choose a smooth cutoff function $\chi:\mathbb R\to[0,1]$ such that
\[
    \chi(x)=0 \quad \text{for } x\le T,
    \qquad
    \chi(x)=1 \quad \text{for } x\ge T+1.
\]
Set
\[
    \phi:=\chi(u)u.
\]
Applying the trace Poincaré inequality Theorem~\ref{Trace Poincare inequality} to $\phi$, we get
\[
\int_{\{u\ge T+1\}}|\nabla u|^2\,\di V
\le
\int_M |\nabla \phi|^2\,\di V  \\
\le
\frac{2}{n}\int_{\partial M}|\nabla \phi|^2\,\di \sigma
+
\int_M |\nabla |\nabla \phi||^2\,\di V .
\]
Since $ |\nabla |\nabla \phi||^2\le |\Hess \phi|^2$, we obtain
\[
\int_{\{u\ge T+1\}}|\nabla u|^2\,\di V
\le
\frac{2}{n}\int_{\partial M}|\nabla(\chi(u)u)|^2\,\di \sigma
+
\int_M |\Hess(\chi(u)u)|^2\,\di V .
\]
On $\partial M$, the Neumann condition gives $\partial_\nu u=0$, hence
\[
    |\nabla(\chi(u)u)|^2
    \le C(T)\chi(u)^2|\nabla u|^2
        +C(T)|\chi'(u)|^2|\nabla u|^2 .
\]
Moreover, since $\chi'$ and $\chi''$ are supported in $\{T\le u\le T+1\}$, the Hessian expansion gives
\[
    |\Hess(\chi(u)u)|^2
    \le
    C(T)\left(
    |\Hess u|^2
    +
    \mathbf 1_{\{T\le u\le T+1\}}
    \left(|\nabla u|^2+|\nabla u|^4\right)
    \right).
\]
Therefore
\[
\begin{aligned}
\int_{\{u\ge T+1\}}|\nabla u|^2\,\di V
&\le
C(n,T)
\Bigg(
\int_{\{u\ge T\}}|\Hess u|^2\,\di V
+
\int_{\partial M\cap\{u\ge T\}}|\nabla u|^2\,\di \sigma  \\
&\qquad\qquad
+
\int_{\{T\le u\le T+1\}}
\left(|\nabla u|^2+|\nabla u|^4\right)\,\di V
\Bigg)
<\infty .
\end{aligned}
\]
Here the last integral is finite because the slab $\{T\le u\le T+1\}$ is compact and lies in the smooth region $M\setminus K$.

On the other hand, by the coarea formula,
\[
\int_{\{u\ge T+1\}}|\nabla u|^2\,\di V
=
\int_{T+1}^{\infty}
\left(
\int_{\Sigma_t}|\nabla u|\,\di \sigma
\right)\,\di t =\int_{T+1}^{\infty} c\,\di t
=
\infty,
\]
 It contradicts the finiteness obtained above.
\end{proof}

\section{Nonparabolic case}\label{sec:nonparabolic}
In this section we treat the nonparabolic case. 
The relevant function for the construction of the monotone quantity is the Neumann positive Green function. 
After fixing a pole $p\in \mathring{M}$, we write this Green function as $G=G_N(p,\cdot)$ and introduce the associated Green distance function
\[
    b:=G^{\frac{1}{2-n}}.
\]
The level sets of $b$ play the same role in the nonparabolic case as the level sets of the Evans potential in the parabolic case. 
The main goal of this section is to construct a scale-invariant monotone quantity associated with these level sets and then use it, together with the trace Poincar\'e inequality, to rule out the nonparabolic case.

\begin{proposition}\label{prop:Nonparabolic confirmation of Li's conjecture}
    Let $(M^n,g)$ be a complete Riemannian $n$-manifold with nonempty boundary $\partial M$ and has
    \begin{equation}
        \Ric_g\ge 0, \quad h\ge 1.
    \end{equation}
    Then $(M,g)$ cannot be noncompact and nonparabolic. 
\end{proposition}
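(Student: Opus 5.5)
We argue by contradiction: assume $M$ is noncompact and nonparabolic, and run the scheme of the parabolic case with the Evans potential replaced by the Green distance. Fix $p\in\mathring M$ and let $G=G_N(p,\cdot)$ be the minimal positive Neumann Green function. Under $\Ric\ge0$ and $h\ge0$ we use that $b=G^{1/(2-n)}$ is proper (Proposition~\ref{prop:existence-properness-Neumann-Green}). Near $p$ we have $b\sim r$. The Neumann condition gives $\partial_\nu b=0$, so the level sets of $b$ meet $\partial M$ orthogonally. Two basic facts come from harmonicity of $G$ and the divergence theorem. First, the flux identity
\[
\int_{\{b=t\}}|\nabla b|\,b^{1-n}\,\di\sigma=c_n
\]
holds for all regular $t$. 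Second, by the coarea formula,
\[
\int_{\{R\le b\le 2R\}} b^{2-n}|\nabla b|^2\,\di V \sim c_n\log 2
\]
for each $R$. Summing over dyadic scales, this weighted energy diverges logarithmically; this is the divergence we will eventually contradict.

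Next, set $A(t)=t^{1-n}\int_{\{b=t\}}|\nabla b|^3\,\di\sigma$ and $F=t^{3-n}A'$. We use the identity $\Delta b^2=2n|\nabla b|^2$, which follows from harmonicity of $G$. We then mimic Theorem~\ref{thm:free-boundary-monotonicity-parabolic}, following Colding's computation. One writes $A'$ as a flux of a vector field built from $\nabla|\nabla b|^2$ with the weight $b^{\ast}$, and applies the divergence theorem on the slab $\{t\le b\le s\}$. The Bochner formula, written in terms of $B=\nabla^2b^2-2|\nabla b|^2g$, should give
\[
F(s)-F(t)=\int_{\{t\le b\le s\}} w(b)\bigl(\tfrac12|B|^2+\Ric(\nabla b^2,\nabla b^2)\bigr)\,\di V+\int_{\partial M\cap\{t\le b\le s\}}w(b)\,h(\nabla b^2,\nabla b^2)\,\di\sigma.
\]
Here $w$ is an explicit power weight. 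The boundary term arises exactly as in the parabolic case, from $\partial_\nu|\nabla b|^2=-2h(\nabla b,\nabla b)$.

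We then split into two cases. \textbf{Case I} is $F(t_0)>0$ for some $t_0$. Cauchy--Schwarz applied to the formula for $A'$, together with the Kato-type inequality for $B$, gives a measure inequality $F^2\,\di t\le C\,t^{\ast}A\,\di F$. The weighted analogue of Lemma~\ref{ODE-lemma} then yields $A(t)\ge ct^{2n-4}$. This is impossible: Cheng--Yau gives $|\nabla b|\le C$, and combined with the flux identity this bounds $A$ uniformly. \textbf{Case II} is $F\le0$ for all large $t$. Monotonicity and $A\ge0$ force $F\to0$. Letting $s\to\infty$ then gives finite weighted energy of $|B|^2$ in the interior and of $|\nabla b|^2$ on $\partial M$, using $h\ge1$.

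The main obstacle is the final step of Case II: converting these weighted finite energies into the finiteness of $\int b^{2-n}|\nabla b|^2$, which contradicts the logarithmic divergence. For this we apply Theorem~\ref{Trace Poincare inequality} to a test function $\phi=\chi(b)\,\psi$ with $\psi$ a suitable power of $b$ (or $b^2$ itself). The tracefree part of $\nabla^2 b^2$ is controlled by $B$. The pure-trace part $2|\nabla b|^2 g$ is the main danger, since it is not small. To handle it, we first try applying the inequality dyadically on the annuli $\{R\le b\le 2R\}$ with a rescaled cutoff. The scale-invariant energies then tend to $0$, while the left side stays $\sim c_n\log2$ per annulus. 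The key check is that the trace-Poincar\'e constants ($2/n$ and $(n+1)/2n^2$) are small enough, relative to the Euclidean ratio, to absorb the pure-trace contribution and produce a contradiction.
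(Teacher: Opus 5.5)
Your setup and two-case structure follow the paper:
\begin{itemize}
\item the Green distance $b$;
\item the monotone quantity $F=t^{3-n}A'$, whose derivative has bulk density $b^{2-2n}\bigl(|B|^2+\Ric(\nabla b^2,\nabla b^2)\bigr)$ and a boundary term coming from $\partial_\nu|\nabla b|^2=-2h(\nabla b,\nabla b)$;
\item Case I, contradicting the bound $A\le C$ from Cheng--Yau and the flux identity;
\item Case II, yielding $\int_{\{b\ge T\}}b^{-n}|B|^2\,\di V<\infty$ and $\int_{\partial M\cap\{b\ge T\}}b^{2-n}|\nabla b|^2\,\di\sigma<\infty$.
\end{itemize}

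The gap is the decisive last step of Case II. You leave it as a ``key check'' that you never perform, and its premise is wrong. The pure-trace part of the Hessian is not the danger you take it to be. No comparison of $2/n$ and $(n+1)/2n^2$ with a Euclidean ratio is needed, and you give no argument for one anyway.

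The missing idea is that the trace Poincar\'e inequality is not scale invariant. Its constants are tied to the inradius bound $\dist_{\partial M}\le 1$, while you apply it on $\{b\sim R\}$ with $R\to\infty$. For $\phi=b^\beta$, the identity $\Hess b=\frac{1}{2b}B-\frac1b\,db\otimes db+\frac{|\nabla b|^2}{b}\,g$ gives $|\Hess\phi|^2\le C\,b^{2\beta-4}\bigl(|B|^2+|\nabla b|^4\bigr)$. So once $|\nabla b|\le C$, the Hessian, pure-trace part included, is smaller than $|\nabla\phi|^2=\beta^2b^{2\beta-2}|\nabla b|^2$ by a factor $b^{-2}$. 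The only term of the inequality that is a priori of the same order as the left side is the boundary term, since every point lies within distance $1$ of $\partial M$. That term is exactly what the $h\ge1$ boundary energy controls.

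Concretely, the paper takes $\phi_R=\eta_R(b)\,b^{(3-n)/2}$ (and $\eta_R(b)\log b$ when $n=3$). It applies the trace inequality to $|\nabla\phi_R|$, together with Kato.
\begin{itemize}
\item The flux identity gives $\int_M|\nabla\phi_R|^2\,\di V\ge cR-C$.
\item On the other hand, $\int_M|\Hess\phi_R|^2\,\di V\le C\int_{\{b\ge T\}}b^{-n-1}\bigl(|B|^2+|\nabla b|^4\bigr)\,\di V+C(T)$. This is finite by the $B$-energy, Cheng--Yau, and $\int_T^\infty s^{-2}\,\di s<\infty$.
\item Also $\int_{\partial M}|\nabla\phi_R|^2\,\di\sigma\le C\int_{\partial M\cap\{b\ge T\}}b^{1-n}|\nabla b|^2\,\di\sigma+C(T)$. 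This is bounded uniformly in $R$ because $b^{1-n}\le T^{-1}b^{2-n}$.
\end{itemize}

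Your dyadic variant would also close, taking $\beta=(2-n)/2$. On each annulus both right-hand terms are $O(R^{-2})$, while the left side is at least $\beta^2c_n\log 2$. But it closes for this scaling reason, not by absorbing constants.

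Separately, your divergence computation uses the wrong weight. By the flux identity,
\[
\int_{\{R\le b\le 2R\}}b^{2-n}|\nabla b|^2\,\di V=c_n\int_R^{2R}s\,\di s=\tfrac32c_nR^2,
\]
not $c_n\log 2$. The weight that produces $\log 2$ per dyadic annulus is $b^{-n}$.
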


The rest of this section is devoted to the proof of Proposition~\ref{prop:Nonparabolic confirmation of Li's conjecture}. We will prove by contradiction, so in the following we always assume that $(M,g)$ is noncompact and nonparabolic.

We first recall the existence and properness of the minimal Neumann Green function under the present curvature assumptions.

\subsection{Existence and properness of the minimal Neumann Green function}

\begin{proposition}\label{prop:existence-properness-Neumann-Green}
Let $(M^n,g)$, $n\ge 3$, be a complete noncompact Riemannian manifold with smooth boundary $\partial M$. Assume that $M$ is nonparabolic 
then, for every pole $p\in \mathring{M}$, there exists a minimal positive Neumann Green function $ G=G_N(p,\cdot)$ satisfying
\[
    \Delta G=-c_n\delta_p
    \quad \text{in } M,
    \qquad
    \partial_\nu G=0
    \quad \text{on } \partial M,
    \qquad
    G>0
    \quad \text{on } M\setminus\{p\}.
\]
Moreover, if $\Ric_g\ge 0$ and $h\ge0$, then
\[
    G(p,x)\to 0
    \quad \text{as } x\to\infty.
\]
Consequently, the associated Green distance function
\[
    b(x):=G(p,x)^{\frac{1}{2-n}}
\]
is proper, that is,
\[
    b(x)\to\infty
    \quad \text{as } x\to\infty.
\]
\end{proposition}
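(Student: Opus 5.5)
The plan is to build $G$ by exhaustion and then obtain the decay from two-sided heat kernel bounds. We use that a manifold with $\Ric\ge 0$ and convex boundary, viewed as a metric measure space, is an $\RCD(0,n)$ space.

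\emph{Existence.} Fix $p\in\mathring M$ and an exhaustion $p\in\Omega_1\Subset\Omega_2\Subset\cdots$ of $M$ by precompact domains. We take the $\partial_M\Omega_i$ smooth and meeting $\partial M$ transversally, so that mixed problems on $\Omega_i$ are well posed. On each $\Omega_i$ we solve for the mixed Green function $G_i$:
\[
\Delta G_i=-c_n\delta_p \text{ in } \Omega_i,\qquad G_i=0 \text{ on } \partial_M\Omega_i,\qquad \partial_\nu G_i=0 \text{ on } \Omega_i\cap\partial M.
\]
This exists by standard elliptic theory, using the local solvability of the mixed problem already used in Proposition~\ref{mixed-harmonic-function}.
\begin{itemize}
\item By the maximum principle together with the Hopf lemma on the Neumann part, $G_i>0$ and $G_i\le G_{i+1}$.
\item Nonparabolicity means the Neumann Dirichlet form on $M$ is transient. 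Equivalently, the mixed capacities of a small ball around $p$ relative to $\Omega_i$ stay bounded below. This gives a uniform bound on $G_i$ on a sphere $\partial B(p,r_0)$, and hence, by the maximum principle, on $\Omega_i\setminus B(p,r_0)$.
\item The local Harnack principle cited in the proof of Proposition~\ref{mixed-harmonic-function} then gives locally smooth convergence on $M\setminus\{p\}$ up to $\partial M$. The limit $G$ is a positive Neumann Green function.
\item Minimality follows because any positive Neumann fundamental solution dominates each $G_i$, again by the maximum principle.
\end{itemize}

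\emph{Decay.} Under $\Ric\ge0$ and $h\ge0$, $(M,\dist_g,\di V)$ is an $\RCD(0,n)$ space. Hence:
\begin{itemize}
\item Bishop--Gromov volume comparison holds, so in particular $V(x,2r)\le 2^nV(x,r)$.
\item The Li--Yau gradient estimate holds for the Neumann heat kernel $H$; for smooth convex boundary this is Li--Yau's original Neumann result.
\item Consequently there is the Gaussian upper bound $H(p,x,t)\le \frac{C}{V(p,\sqrt t)}e^{-\dist(p,x)^2/5t}$.
\end{itemize}
The minimal Green function is $G(p,x)=c_n\int_0^\infty H(p,x,t)\,\di t$. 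Inserting the upper bound and using volume doubling to compare $V(p,\sqrt t)$ across scales gives
\[
G(p,x)\le C\int_{\dist(p,x)}^\infty \frac{r}{V(p,r)}\,\di r .
\]
The matching Gaussian lower bound shows that nonparabolicity is equivalent to $\int_1^\infty \frac{r}{V(p,r)}\,\di r<\infty$. Therefore the right-hand side tends to $0$ as $\dist(p,x)\to\infty$, and so $G(p,x)\to0$ as $x\to\infty$.

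\emph{Properness.} Since $n\ge3$, we have $b=G^{1/(2-n)}\to\infty$ as $x\to\infty$, so $b$ is proper.

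\emph{Main obstacle.} The delicate step is justifying the heat kernel machinery with a boundary: the Li--Yau estimate, the Gaussian bounds, and the equivalence between nonparabolicity and the volume integral condition. I would first try to cite the $\RCD(0,n)$ characterization of convex-boundary manifolds and then import the heat kernel theory for $\RCD$ spaces wholesale. If that is unavailable, I would instead verify volume doubling directly via Bishop--Gromov, derive a Neumann Poincar\'e inequality on balls from the convexity of the boundary, and apply the Grigor'yan--Saloff-Coste theory for Dirichlet forms satisfying doubling and Poincar\'e.
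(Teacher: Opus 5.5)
Your proposal is correct and, for the substantive decay statement, follows the paper's route. The ingredients match: Han's observation that $\Ric\ge0$, $h\ge0$ makes $M$ an $\RCD(0,n)$ space, Gaussian bounds for the Neumann heat kernel, and the estimate $G(p,x)\le C\int_{\dist(p,x)}^\infty r/V(p,r)\,\di r$, which the paper imports from Bru\`e--Semola and you derive directly. You also usefully invoke the Gaussian lower bound to show that nonparabolicity forces $\int_1^\infty r/V(p,r)\,\di r<\infty$, a step the paper leaves implicit.

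The only real difference is existence. You build $G$ by a Li--Tam exhaustion with mixed Dirichlet--Neumann Green functions and a capacity bound, whereas the paper simply sets $G_N=\int_0^\infty P_N\,\di t$. Your construction works, but you later use the heat-kernel representation anyway. So you should note that the exhaustion limit coincides with it, e.g.\ because the mixed heat kernels on $\Omega_i$ increase to the Neumann heat kernel of $M$.
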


\begin{proof}
The existence of the minimal positive Neumann Green function is standard for nonparabolic manifolds. 
Indeed, let $P_N(t,x,y)$ denote the Neumann heat kernel. 
Nonparabolicity is equivalent to the finiteness of the Green kernel
\[
    G_N(x,y):=\int_0^\infty P_N(t,x,y)\,\di t
\]
for $x\ne y$. 

The properness of $G_N$ under the curvature assumptions is also well-known. It follows from the Gaussian estimates of the heat kernel that 
\[
G_N(x,y)\le \int_{\dist_g(x,y)}^\infty \frac{s}{V(B(x,s))}\, \di s,
\]
see Brue--Semola \cite{BS20}*{Proposition 2.3} and its proof. Note that by an observation of Han \cite{HanMeasureRigidity}, if $(M,g, \vol_g)$ satisfies $\Ric_g\ge0$ and $h\ge 0$ then it is an $\RCD(0,n)$ space, and by definition the Green's function in an $\RCD(0,n)$ space is the Neuamnn one, so \cite{BS20} is applicable. This estimate implies $G_N\to 0$ as $\dist_g(x,y)\to\infty$ which is in turn equivalent to
\[
    b(x)\to\infty
    \quad \text{as } x\to\infty.
\]
Thus $b$ is a proper function on $M$.
\end{proof}

In the sequel, we fix a pole $p\in \mathring{M}$ and write
\[
    G:=G_N(p,\cdot),
    \qquad
    b:=G^{\frac{1}{2-n}}.
\]
All computations are carried out on $M\setminus\{p\}$ and on regular level sets of $b$.

\subsection{Monotone functional}
We now turn to the monotonicity formula associated with minimal positive Neumann Green's function constructed above. 

Let $(M^n,g)$, $n\ge 3$, be a complete noncompact Riemannian manifold
with nonempty boundary $\del M$.
Assume that $M$ is nonparabolic for the Neumann problem, and let $G$ be
the minimal positive Neumann Green's function with pole $p\in \mathring{M}$,
normalized so that
\[
        \Delta G=-c_n\delta_p,
        \qquad
        \del_\nu G=0
        \quad \text{on } \del M .
\]
Throughout this subsection, define the Green distance function
\[
        b:=G^{\frac{1}{2-n}} .
\]
For a regular value $t>0$, set
\[
        \Sigma_t:=\{b=t\},
        \qquad
        \n:=\frac{\nabla b}{|\nabla b|}.
\]
For regular values $t$, define
\[
        A(t):=t^{1-n}\int_{\Sigma_t}|\nabla b|^3\,\di\sigma.
\]

\begin{lemma}\label{lem:green-distance-basic-identities}
Away from the pole $p$, it holds
\[
        b\Delta b=(n-1)|\nabla b|^2,
        \qquad
        \Delta b^2=2n|\nabla b|^2 .
\]
Moreover,
\[
        \del_\nu b=0
        \quad\text{on }\del M.
\]
Consequently, $\nabla b$ is tangent to $\del M$, and every regular level set
$\Sigma_t=\{b=t\}$ meets $\del M$ orthogonally.
\end{lemma}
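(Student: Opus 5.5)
We prove the identities by a direct chain-rule computation from $G=b^{2-n}$, using that $G$ is harmonic away from the pole, and then read off the boundary statement from the Neumann condition for $G$.

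\textbf{Interior identities.} On $M\setminus\{p\}$ we have $\Delta G=0$, and $G>0$, so $b=G^{\frac{1}{2-n}}$ is smooth there. Writing $G=b^{2-n}$, we get
\[
\nabla G=(2-n)b^{1-n}\nabla b,
\qquad
\Delta G=(2-n)b^{1-n}\Delta b+(2-n)(1-n)b^{-n}|\nabla b|^2 .
\]
Since $n\ge3$, the factor $2-n$ is nonzero. Setting $\Delta G=0$, dividing by $(2-n)b^{-n}$, and rearranging gives $b\Delta b=(n-1)|\nabla b|^2$. Then
\[
\Delta b^2=2b\Delta b+2|\nabla b|^2=2n|\nabla b|^2 .
\]

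\textbf{Boundary condition.} On $\partial M\setminus\{p\}$ the same chain rule gives
\[
\partial_\nu G=(2-n)b^{1-n}\partial_\nu b .
\]
The prefactor is nonzero, so $\partial_\nu G=0$ forces $\partial_\nu b=0$. The pole lies in $\mathring{M}$, so this holds on all of $\partial M$. Hence $\nabla b$ is tangent to $\partial M$.

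\textbf{Orthogonality of level sets.} Let $t$ be a regular value and $x\in\Sigma_t\cap\partial M$. Then $\nabla b(x)\ne0$ and $\n=\nabla b/|\nabla b|$ is the unit normal of $\Sigma_t$. Since $\n$ lies in $T_x\partial M$, it is orthogonal to $\nu$, so $\nu(x)\in T_x\Sigma_t$. Moreover $\n$ is transverse to $\partial M$ as seen from within $\Sigma_t$: the gradient of $b|_{\partial M}$ equals $\nabla b\ne0$. Hence $b|_{\partial M}$ has $t$ as a regular value, $\Sigma_t\cap\partial M$ is a smooth hypersurface of $\partial M$, and $\Sigma_t$ meets $\partial M$ orthogonally (the normals $\n$ and $\nu$ are orthogonal).

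\textbf{Main obstacle.} The only delicate point is regularity. We must know that $G$ is smooth up to $\partial M$ away from $p$, so that the classical Neumann condition $\partial_\nu G=0$ holds pointwise. This comes from the construction: $G$ is a locally uniform limit of Neumann Green functions on exhausting domains, and standard elliptic boundary regularity for the Neumann problem then applies. Everything else is algebra.
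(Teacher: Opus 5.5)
Your proof is correct and follows the paper's argument essentially line by line: expand $\Delta b^{2-n}=0$ by the chain rule, use $\Delta b^2=2b\Delta b+2|\nabla b|^2$, and pass the Neumann condition from $G$ to $b$ through the nonvanishing factor $(2-n)b^{1-n}$. Your additional remarks, that $t$ is a regular value of $b|_{\partial M}$ and that $G$ is smooth up to $\partial M$ away from $p$, are valid details the paper leaves implicit.
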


\begin{proof}
Away from $p$, $G=b^{2-n}$ is harmonic. Hence
\[
        0=\Delta b^{2-n}
        =
        (2-n)b^{1-n}\Delta b
        +
        (2-n)(1-n)b^{-n}|\nabla b|^2,
\]
which gives
\[
        b\Delta b=(n-1)|\nabla b|^2.
\]
Then
\[
        \Delta b^2=2b\Delta b+2|\nabla b|^2=2n|\nabla b|^2.
\]
Since $\del_\nu G=0$ on $\del M$, we also have $\del_\nu b=0$ on $\del M$.
Thus $\nabla b$ is tangent to $\del M$. Along
$\Gamma_t:=\Sigma_t\cap\del M$, the normal $\n$ to $\Sigma_t$ is tangent to
$\del M$, and hence $\Sigma_t$ meets $\del M$ orthogonally.
\end{proof}

\begin{lemma}\label{lem:first-variation-nonparabolic}
For every regular value $t$ of $b$, it holds
\[
        A'(t)
        =
        2t^{1-n}
        \int_{\Sigma_t}
        |\nabla b|\,\Hess b(\n,\n)\,\di\sigma .
\]
Equivalently,
\[
        A'(t)
        =
        t^{1-n}
        \int_{\Sigma_t}
        \del_{\n}|\nabla b|^2\,\di\sigma .
\]
\end{lemma}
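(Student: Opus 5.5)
I would mimic the computation in the proof of Theorem~\ref{thm:free-boundary-monotonicity-parabolic}, now with the weight $t^{1-n}$ and with the harmonic function replaced by $b$, which is no longer harmonic. The starting point is the first variation formula for regular level sets, applied to $f=|\nabla b|^3$:
\[
\frac{\di}{\di t}\int_{\Sigma_t} f\,\di\sigma=\int_{\Sigma_t}\left(\frac{\langle\nabla f,\nabla b\rangle}{|\nabla b|^2}+\frac{fH_{\Sigma_t}}{|\nabla b|}\right)\di\sigma .
\]
One might worry about a boundary contribution here, since $\Sigma_t$ has boundary $\Sigma_t\cap\del M$. However, the flow moving $\Sigma_t$ to $\Sigma_{t+\epsilon}$ can be taken along $\nabla b/|\nabla b|^2$. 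By Lemma~\ref{lem:green-distance-basic-identities}, this field is tangent to $\del M$. Hence the variation preserves $M$, the moving hypersurfaces stay inside $M$, and the formula holds with no extra boundary term. This is the same justification that was used tacitly in the parabolic case.

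Adding the weight, the product rule gives
\[
A'(t)=(1-n)t^{-n}\int_{\Sigma_t}|\nabla b|^3\,\di\sigma+t^{1-n}\int_{\Sigma_t}\Big(3|\nabla b|\Hess b(\n,\n)+|\nabla b|^2H_{\Sigma_t}\Big)\di\sigma,
\]
where I have used $\langle\nabla|\nabla b|^3,\nabla b\rangle=3|\nabla b|^3\Hess b(\n,\n)$.

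The key new input is the mean curvature of the level set. Since $\Delta b=\Hess b(\n,\n)+|\nabla b|H_{\Sigma_t}$, and Lemma~\ref{lem:green-distance-basic-identities} gives $\Delta b=(n-1)|\nabla b|^2/b$, on $\Sigma_t$ we obtain
\[
H_{\Sigma_t}=\frac{(n-1)|\nabla b|}{t}-\frac{\Hess b(\n,\n)}{|\nabla b|}.
\]
Substituting this, the integrand becomes $2|\nabla b|\Hess b(\n,\n)+(n-1)|\nabla b|^3/t$. The second term contributes $(n-1)t^{-n}\int_{\Sigma_t}|\nabla b|^3$, which exactly cancels the term coming from differentiating $t^{1-n}$. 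This leaves the claimed formula $A'(t)=2t^{1-n}\int_{\Sigma_t}|\nabla b|\Hess b(\n,\n)\,\di\sigma$.

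The equivalent form follows from $\del_{\n}|\nabla b|^2=2\Hess b(\nabla b,\n)=2|\nabla b|\Hess b(\n,\n)$.

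The main point to watch is the exact cancellation of the weight derivative against the $\Delta b$ term. This is precisely why the exponent $1-n$ is chosen, so I would double-check the sign conventions for $H_{\Sigma_t}$ (mean curvature with respect to $\n$, with $\operatorname{div}\n=H$) to make sure this cancellation comes out correctly. A secondary point is that regularity of $t$ makes $\Sigma_t$ a smooth hypersurface with boundary meeting $\del M$ orthogonally, and that $t$ being a regular value keeps $\Sigma_t$ away from the pole $p$.
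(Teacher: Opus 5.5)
Your proposal is correct and follows the paper's proof essentially line by line. Both apply the first variation formula to $f=|\nabla b|^3$, express the level-set mean curvature via $b\Delta b=(n-1)|\nabla b|^2$, and cancel the $(1-n)t^{-n}$ term coming from the weight $t^{1-n}$. Your remark that the variation field $\nabla b/|\nabla b|^2$ is tangent to $\partial M$, so that no boundary term arises, is a justification the paper leaves implicit.
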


\begin{proof}
Let $I(t):=\int_{\Sigma_t}|\nabla b|^3\,\di\sigma$, so that
$A(t)=t^{1-n}I(t)$. For a level set integral,
\[
        \frac{d}{\di t}\int_{\Sigma_t} f\,\di\sigma
        =
        \int_{\Sigma_t}
        \left(
        \frac{\del_{\n}f}{|\nabla b|}
        +
        \frac{fH}{|\nabla b|}
        \right)\,\di\sigma,
\]
where $H$ is the mean curvature of $\Sigma_t$ with respect to $\n$.
Taking $f=|\nabla b|^3$, we get
\[
        \frac{\del_{\n}(|\nabla b|^3)}{|\nabla b|}
        =
        3|\nabla b|\Hess b(\n,\n).
\]
Since $H=\divsymb_{\Sigma_t}\n$, \Cref{lem:green-distance-basic-identities}
gives, on $\Sigma_t$,
\[
        H
        =
        \frac{\Delta b}{|\nabla b|}
        -
        \frac{\Hess b(\n,\n)}{|\nabla b|}
        =
        \frac{n-1}{t}|\nabla b|
        -
        \frac{\Hess b(\n,\n)}{|\nabla b|}.
\]
Hence
\[
        \frac{|\nabla b|^3H}{|\nabla b|}
        =
        \frac{n-1}{t}|\nabla b|^3
        -
        |\nabla b|\Hess b(\n,\n).
\]
Thus
\[
        I'(t)
        =
        \int_{\Sigma_t}
        \left[
        2|\nabla b|\Hess b(\n,\n)
        +
        \frac{n-1}{t}|\nabla b|^3
        \right]\di\sigma .
\]
Differentiating $A(t)=t^{1-n}I(t)$ gives
\[
        A'(t)
        =
        (1-n)t^{-n}I(t)+t^{1-n}I'(t).
\]
The zeroth-order terms cancel, so
\[
        A'(t)
        =
        2t^{1-n}
        \int_{\Sigma_t}
        |\nabla b|\Hess b(\n,\n)\,\di\sigma .
\]
The second formula follows from
\[
        \del_{\n}|\nabla b|^2
        =
        2|\nabla b|\Hess b(\n,\n).
\]
\end{proof}

\begin{lemma}\label{lem:trace-free-hessian-expression}
Let
\[
        B:=\Hess b^2-\frac{\Delta b^2}{n}g= B=\Hess b^2-2|\nabla b|^2g .
\]
Then
\[
        B(\n,\n)=2b\Hess b(\n,\n).
\]
Consequently, for every regular value $t$,
\[
        A'(t)
        =
        t^{-n}
        \int_{\Sigma_t}
        |\nabla b|B(\n,\n)\,\di\sigma .
\]
\end{lemma}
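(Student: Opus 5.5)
The identity is pointwise, so I would compute $\Hess b^2$ directly and evaluate it on $(\n,\n)$. The only earlier input needed is \Cref{lem:green-distance-basic-identities}, which gives $\Delta b^2 = 2n|\nabla b|^2$ away from the pole; this justifies the two expressions for $B$ in the statement. Away from $p$ the product rule gives
\[
\Hess b^2 = 2b\,\Hess b + 2\,\di b\otimes \di b ,
\]
so that
\[
B = 2b\,\Hess b + 2\,\di b\otimes\di b - 2|\nabla b|^2 g .
\]

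Next I would evaluate on $(\n,\n)$ with $\n=\nabla b/|\nabla b|$. This gives $(\di b\otimes \di b)(\n,\n)=\langle\nabla b,\n\rangle^2=|\nabla b|^2$ and $g(\n,\n)=1$. The last two terms therefore cancel, leaving $B(\n,\n)=2b\,\Hess b(\n,\n)$.

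For the consequence I would substitute this into \Cref{lem:first-variation-nonparabolic}. On $\Sigma_t$ we have $b=t$, hence
\[
\Hess b(\n,\n)=\frac{B(\n,\n)}{2t} .
\]
Plugging this into $A'(t)=2t^{1-n}\int_{\Sigma_t}|\nabla b|\Hess b(\n,\n)\,\di\sigma$ gives $A'(t)=t^{-n}\int_{\Sigma_t}|\nabla b|B(\n,\n)\,\di\sigma$.

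There is no real obstacle. The only point to check is that the level sets are regular, so that $\n$ is well defined there, and that they avoid the pole. Both hold because $t$ is a regular value and $b(p)=0$.
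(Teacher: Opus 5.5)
Your proposal is correct and follows the paper's proof: you expand $\Hess b^2=2b\,\Hess b+2\,\di b\otimes\di b$, observe that the $2|\nabla b|^2$ terms cancel on $(\n,\n)$, and then substitute $b=t$ into \Cref{lem:first-variation-nonparabolic}. Your added remarks, that $\Delta b^2=2n|\nabla b|^2$ justifies the two forms of $B$ and that positive regular level sets avoid the pole since $b(p)=0$, are accurate.
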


\begin{proof}
Since
\[
        \Hess b^2=2b\,\Hess b+2\,db\otimes db,
\]
we have
\[
        B(\n,\n)
        =
        2b\Hess b(\n,\n)+2|\nabla b|^2-2|\nabla b|^2
        =
        2b\Hess b(\n,\n).
\]
Combining this with \Cref{lem:first-variation-nonparabolic}, and using
$b=t$ on $\Sigma_t$, gives the formula for $A'(t)$.
\end{proof}

\begin{lemma}\label{lem:weighted-interior-divergence}
Away from the pole $p$, we have
\[
        \divsymb\left(b^{4-2n}\nabla|\nabla b|^2\right)
        =
        \frac12 b^{2-2n}
        \left(
        |B|^2+\Ric(\nabla b^2,\nabla b^2)
        \right).
\]
\end{lemma}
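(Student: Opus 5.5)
The identity is a pointwise computation on $M\setminus\{p\}$, which is where $b$ is smooth. The plan is to expand the left-hand side using the Bochner formula and the identity $b\Delta b=(n-1)|\nabla b|^2$ from \Cref{lem:green-distance-basic-identities}. Separately, we expand $|B|^2$ directly from its definition, and then match the two expressions term by term. No boundary condition enters, since the statement is purely interior.

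\textbf{Left-hand side.} By the product rule,
\[
\divsymb\left(b^{4-2n}\nabla|\nabla b|^2\right)=b^{4-2n}\Delta|\nabla b|^2+(4-2n)b^{3-2n}\langle\nabla b,\nabla|\nabla b|^2\rangle .
\]
The Bochner formula gives
\[
\Delta|\nabla b|^2=2|\Hess b|^2+2\langle\nabla b,\nabla\Delta b\rangle+2\Ric(\nabla b,\nabla b).
\]
Since $\Delta b=(n-1)|\nabla b|^2/b$, we have
\[
\nabla\Delta b=(n-1)\left(\frac{\nabla|\nabla b|^2}{b}-\frac{|\nabla b|^2\nabla b}{b^2}\right).
\]
Substituting and collecting terms, the left-hand side becomes
\[
b^{2-2n}\Big(2b^2|\Hess b|^2+2b^2\Ric(\nabla b,\nabla b)-2(n-1)|\nabla b|^4\Big)+\big(2(n-1)+4-2n\big)b^{3-2n}\langle\nabla b,\nabla|\nabla b|^2\rangle .
\]
The last coefficient equals $2$.

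\textbf{Right-hand side.} Write $B=2b\Hess b+2\,db\otimes db-2|\nabla b|^2g$ and expand the square. We use the relations $\Hess b(\nabla b,\nabla b)=\tfrac12\langle\nabla b,\nabla|\nabla b|^2\rangle$ and $\tr\Hess b=\Delta b=(n-1)|\nabla b|^2/b$. The cross terms contribute $8b\Hess b(\nabla b,\nabla b)-8b|\nabla b|^2\Delta b-8|\nabla b|^4$, and the diagonal terms contribute $4b^2|\Hess b|^2+4|\nabla b|^4+4n|\nabla b|^4$. This yields
\[
|B|^2=4b^2|\Hess b|^2+4b\langle\nabla b,\nabla|\nabla b|^2\rangle-4(n-1)|\nabla b|^4 .
\]
In addition, $\Ric(\nabla b^2,\nabla b^2)=4b^2\Ric(\nabla b,\nabla b)$.

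\textbf{Comparison.} Multiplying by $\tfrac12 b^{2-2n}$, the right-hand side equals
\[
b^{2-2n}\big(2b^2|\Hess b|^2+2b\langle\nabla b,\nabla|\nabla b|^2\rangle-2(n-1)|\nabla b|^4+2b^2\Ric(\nabla b,\nabla b)\big).
\]
This agrees with the left-hand side term by term, which proves the identity.

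The only delicate point is bookkeeping. One must verify that the first-order terms $\langle\nabla b,\nabla|\nabla b|^2\rangle$ coming from the weight $b^{4-2n}$ and from $\nabla\Delta b$ combine to exactly the coefficient $2$ appearing in $|B|^2$. One must also check that the quartic terms $|\nabla b|^4$ cancel correctly. It is precisely the choice of exponent $4-2n$ that makes both of these match.
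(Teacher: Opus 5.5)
Your proof is correct and follows the paper's argument essentially line by line: you expand the left side with the product rule and Bochner's formula using $b\Delta b=(n-1)|\nabla b|^2$, expand $|B|^2$ directly (with more detail than the paper gives), and match the two term by term. One small correction to your closing remark: the exponent $4-2n$ is needed only to make the first-order coefficient equal $2$, whereas the quartic terms match for any choice of weight.
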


\begin{proof}
By Bochner's formula,
\[
        \Delta |\nabla b|^2
        =
        2|\Hess b|^2
        +2\Ric(\nabla b,\nabla b)
        +2\lp \nabla b,\nabla\Delta b\rp .
\]
Using $\Delta b=(n-1)b^{-1}|\nabla b|^2$, we get
\[
        \lp \nabla b,\nabla\Delta b\rp
        =
        (n-1)
        \left(
        \frac{\lp \nabla b,\nabla |\nabla b|^2\rp}{b}
        -
        \frac{|\nabla b|^4}{b^2}
        \right).
\]
Therefore
\[
\begin{aligned}
        \divsymb\left(b^{4-2n}\nabla |\nabla b|^2\right)
        &=
        b^{4-2n}\Delta |\nabla b|^2
        +(4-2n)b^{3-2n}\lp \nabla b,\nabla |\nabla b|^2\rp       \\
        &=
        2b^{4-2n}|\Hess b|^2
        +2b^{4-2n}\Ric(\nabla b,\nabla b)                 \\
        &\quad
        +2b^{3-2n}\lp \nabla b,\nabla |\nabla b|^2\rp
        -2(n-1)b^{2-2n}|\nabla b|^4 .
\end{aligned}
\]
On the other hand,
\[
        B=2b\Hess b+2db\otimes db-2|\nabla b|^2g.
\]
A direct expansion gives
\[
        |B|^2
        =
        4b^2|\Hess b|^2
        +4b\lp \nabla b,\nabla |\nabla b|^2\rp
        -4(n-1)|\nabla b|^4 .
\]
Since
\[
        \Ric(\nabla b^2,\nabla b^2)=4b^2\Ric(\nabla b,\nabla b),
\]
the identity follows.
\end{proof}

\begin{lemma}\label{lem:free-boundary-contribution}
Along $\del M$,
\[
        \del_\nu|\nabla b|^2
        =
        -2h(\nabla b,\nabla b)
        =
        -2|\nabla b|^2h(\n,\n).
\]
Consequently, on
\[
        \Gamma_{t_1,t_2}:=\del M\cap\{t_1\le b\le t_2\},
\]
we have
\[
        -\int_{\Gamma_{t_1,t_2}}
        b^{4-2n}\del_\nu|\nabla b|^2\,\di \sigma
        =
        2\int_{\Gamma_{t_1,t_2}}
        b^{4-2n}|\nabla b|^2h(\n,\n)\,\di \sigma .
\]
\end{lemma}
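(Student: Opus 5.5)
The identity is pointwise along $\partial M$ and needs only the Neumann condition of Lemma~\ref{lem:green-distance-basic-identities} together with the definition of $h$. It is the same computation as the boundary term in Theorem~\ref{thm:free-boundary-monotonicity-parabolic}, with $u$ replaced by $b$.

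First, at a point of $\partial M$ (away from $p$, which lies in $\mathring M$), write
\[
\partial_\nu|\nabla b|^2=2\lp \nabla_\nu\nabla b,\nabla b\rp=2\Hess b(\nu,\nabla b)=2\Hess b(\nabla b,\nu),
\]
using the symmetry of the Hessian. By Lemma~\ref{lem:green-distance-basic-identities}, $\lp\nabla b,\nu\rp=0$ on $\partial M$. Hence the function $\lp\nabla b,\nu\rp$ vanishes identically on $\partial M$, and its derivative in any tangential direction $X\in T\partial M$ is zero. Extend $\nu$ smoothly to a neighborhood of $\partial M$ and take $X=\nabla b$, which is tangent to $\partial M$:
\[
0=X\lp\nabla b,\nu\rp=\Hess b(X,\nu)+\lp\nabla b,\nabla_X\nu\rp .
\]
Since $X$ and $\nabla b$ are both tangent to $\partial M$, the last term is exactly $h(X,\nabla b)$ by the stated convention $h(X,Y)=\lp\nabla_X\nu,Y\rp$. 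With $X=\nabla b$ this gives $\Hess b(\nabla b,\nu)=-h(\nabla b,\nabla b)$, and therefore
\[
\partial_\nu|\nabla b|^2=-2h(\nabla b,\nabla b).
\]

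Second, on the set where $\nabla b\neq 0$, write $\nabla b=|\nabla b|\,\n$ with $\n$ tangent to $\partial M$. Bilinearity of $h$ then gives $h(\nabla b,\nabla b)=|\nabla b|^2h(\n,\n)$. Where $\nabla b=0$, both sides vanish, so we may interpret $|\nabla b|^2h(\n,\n)$ as $0$ there. On regular levels this issue does not arise.

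Finally, multiply the pointwise identity by $-b^{4-2n}$ and integrate over $\Gamma_{t_1,t_2}$ to obtain the integral formula. This integral is finite: for positive $t_1$, the set $\{t_1\le b\le t_2\}$ is compact because $b$ is proper by Proposition~\ref{prop:existence-properness-Neumann-Green}, and it avoids $p$, where $b=0$. The only point requiring care is the differentiation of the tangential identity $\lp\nabla b,\nu\rp=0$, including the sign convention for $h$. That convention matches the one already used in the parabolic case, so no real obstacle is expected.
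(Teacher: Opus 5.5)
Your proposal is correct and follows essentially the same route as the paper. You differentiate the Neumann condition $\lp\nabla b,\nu\rp=0$ in the tangential direction $X=\nabla b$, use the convention $h(X,Y)=\lp\nabla_X\nu,Y\rp$ to get $\Hess b(\nabla b,\nu)=-h(\nabla b,\nabla b)$, and then integrate the pointwise identity against $-b^{4-2n}$; your extra remarks on the set $\{\nabla b=0\}$ and on the compactness of $\Gamma_{t_1,t_2}$ are harmless details that the paper leaves implicit.
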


\begin{proof}
By \Cref{lem:green-distance-basic-identities}, $\nabla b$ is tangent to
$\del M$. If $X$ is tangent to $\del M$, differentiating the Neumann condition
gives
\[
        0=X\lp \nabla b,\nu\rp
        =
        \Hess b(X,\nu)+\lp \nabla b,\nabla_X\nu\rp .
\]
By the convention fixed in the introduction, taking $X=\nabla b$ gives
\[
        \Hess b(\nabla b,\nu)=-h(\nabla b,\nabla b).
\]
Therefore
\[
        \del_\nu|\nabla b|^2
        =
        2\Hess b(\nabla b,\nu)
        =
        -2h(\nabla b,\nabla b)
        =
        -2|\nabla b|^2h(\n,\n),
\]
and the integral identity follows.
\end{proof}

\begin{theorem}\label{thm:weighted-second-variation-nonparabolic}
Let $0<t_1<t_2$ be regular values of $b$, and set
\[
        \Omega_{t_1,t_2}:=\{t_1\le b\le t_2\}.
\]
Then
\[
\begin{aligned}
        t_2^{3-n}A'(t_2)-t_1^{3-n}A'(t_1)
        &=
        \frac12
        \int_{\Omega_{t_1,t_2}}
        b^{2-2n}
        \left(
        |B|^2+\Ric(\nabla b^2,\nabla b^2)
        \right)\,\di V                                      \\
        &\quad
        +2\int_{\del M\cap\Omega_{t_1,t_2}}
        b^{4-2n}|\nabla b|^2
        h(\n,\n)\,\di \sigma .
\end{aligned}
\]
In particular, if $\Ric\ge0$ and $h\ge0$, then
\[
        t^{3-n}A'(t)
\]
is monotone nondecreasing in $t$. If in addition $h\ge1$, then
\begin{align*}
        t_2^{3-n}A'(t_2)-t_1^{3-n}A'(t_1)\ge
        \frac12
        \int_{\Omega_{t_1,t_2}}
        b^{2-2n}|B|^2\,\di V 
        +2\int_{\del M\cap\Omega_{t_1,t_2}}
        b^{4-2n}|\nabla b|^2\,\di \sigma .
\end{align*}
At regular values where the derivative exists,
\begin{align*}
        \frac{d}{\di t}\left(t^{3-n}A'(t)\right)=
        \frac12
        t^{2-2n}
        \int_{\Sigma_t}
        \frac{|B|^2+\Ric(\nabla b^2,\nabla b^2)}{|\nabla b|}
        \,\di\sigma        
        +
        2t^{4-2n}
        \int_{\del\Sigma_t}
        |\nabla b|h(\n,\n)\,\di\ell .
\end{align*}
If $\Ric\ge0$ and $h\ge1$, then
\begin{align*}
        \frac{d}{\di t}\left(t^{3-n}A'(t)\right)\ge
        \frac12
        t^{2-2n}
        \int_{\Sigma_t}
        \frac{|B|^2}{|\nabla b|}
        \,\di\sigma
        +
        2t^{4-2n}
        \int_{\del\Sigma_t}
        |\nabla b|\,\di\ell .
\end{align*}
Equivalently, by the coarea formula on $\del M$,
\begin{align*}
        t_2^{3-n}A'(t_2)-t_1^{3-n}A'(t_1)
        \ge
        \frac12
        \int_{\Omega_{t_1,t_2}}
        b^{2-2n}|B|^2\,\di V   
        +
        2\int_{t_1}^{t_2}
        s^{4-2n}
        \int_{\del\Sigma_s}|\nabla b|\,\di\ell\,\di s .
\end{align*}
\end{theorem}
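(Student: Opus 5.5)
The identity follows the same pattern as Theorem~\ref{thm:free-boundary-monotonicity-parabolic}: integrate the weighted divergence identity of Lemma~\ref{lem:weighted-interior-divergence} over the slab $\Omega_{t_1,t_2}$ and read off the boundary terms using Lemma~\ref{lem:first-variation-nonparabolic} and Lemma~\ref{lem:free-boundary-contribution}. Since $t_1>0$ is a regular value and $b(p)=0$, the slab avoids the pole, so $b$ is smooth on a neighborhood of $\Omega_{t_1,t_2}$. Because $b$ is proper by Proposition~\ref{prop:existence-properness-Neumann-Green}, the slab is compact. The divergence theorem therefore applies to the vector field $X:=b^{4-2n}\nabla|\nabla b|^2$. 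Here $\partial\Omega_{t_1,t_2}=\Sigma_{t_2}\cup\Sigma_{t_1}\cup\Gamma_{t_1,t_2}$, with outward normals $\n$, $-\n$ and $\nu$ respectively.

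\textbf{Level-set boundary terms.} On $\Sigma_{t}$ we have $b=t$, so
\[
\int_{\Sigma_t}\langle X,\n\rangle\,\di\sigma=t^{4-2n}\int_{\Sigma_t}\partial_{\n}|\nabla b|^2\,\di\sigma .
\]
By the second formula of Lemma~\ref{lem:first-variation-nonparabolic}, this equals $t^{4-2n}\,t^{n-1}A'(t)=t^{3-n}A'(t)$. Hence the two level-set contributions are exactly $t_2^{3-n}A'(t_2)-t_1^{3-n}A'(t_1)$.

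\textbf{Interior and free-boundary terms.} The interior integral is given by Lemma~\ref{lem:weighted-interior-divergence}. The contribution from $\Gamma_{t_1,t_2}$ is $\int_{\Gamma_{t_1,t_2}}b^{4-2n}\partial_\nu|\nabla b|^2\,\di\sigma$, and we move it to the other side. Lemma~\ref{lem:free-boundary-contribution} then converts it into $2\int b^{4-2n}|\nabla b|^2h(\n,\n)\,\di\sigma$. This gives the main identity.

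\textbf{Consequences.} If $\Ric\ge0$ and $h\ge0$, both terms on the right are nonnegative, so $t^{3-n}A'(t)$ is monotone nondecreasing. If $h\ge 1$, then $h(\n,\n)\ge1$, because $\n$ is a unit vector tangent to $\partial M$. Dropping the Ricci term then gives the stated lower bound.

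\textbf{Derivative formula.} Apply the coarea formula to each term with respect to $b$. In the interior, $\di V=|\nabla b|^{-1}\di\sigma\,\di t$ on $\Sigma_t$. On $\partial M$, use $b|_{\partial M}$, whose gradient is $\nabla b$ itself since $\nabla b$ is tangent. So $\di\sigma=|\nabla b|^{-1}\di\ell\,\di s$ on $\partial\Sigma_s=\Sigma_s\cap\partial M$. The boundary integrand then becomes $s^{4-2n}|\nabla b|h(\n,\n)$ on $\partial\Sigma_s$, and differentiating in $t_2$ gives the pointwise formula. The inequality version and the integrated coarea form follow in the same way, again using $h(\n,\n)\ge1$.

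\textbf{Main obstacle.} The only delicate point is regularity: whether $\partial\Sigma_s$ is a smooth codimension-two manifold and whether the coarea formula applies on $\partial M$. This needs $s$ to be a regular value of $b|_{\partial M}$ as well. That holds because, at points of $\partial M$, $\nabla(b|_{\partial M})=\nabla b\neq0$ whenever $s$ is regular for $b$. Everything else is bookkeeping.
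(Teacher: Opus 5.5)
Your proposal is correct and follows the paper's proof essentially step for step. You apply the divergence theorem to $b^{4-2n}\nabla|\nabla b|^2$ on the slab, identify the level-set terms as $t^{3-n}A'(t)$, use the two lemmas for the interior and free-boundary terms, and then read off the pointwise and integrated forms via the coarea formula in $M$ and on $\partial M$. Your extra remarks fill in details the paper leaves implicit: the slab is compact because $b$ is proper (which uses the standing assumptions $\Ric\ge0$, $h\ge0$), and regular values of $b$ are also regular for $b|_{\partial M}$ because $\partial_\nu b=0$.
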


\begin{proof}
By \Cref{lem:first-variation-nonparabolic}, we know that
\[
        t^{3-n}A'(t)
        =
        \int_{\Sigma_t}
        b^{4-2n}\del_{\n}|\nabla b|^2\,\di\sigma .
\]
We now apply the divergence theorem to the vector field
\[
        b^{4-2n}\nabla|\nabla b|^2
\]
on $\Omega_{t_1,t_2}$. The outward normal is $\n$ on $\Sigma_{t_2}$,
$-\n$ on $\Sigma_{t_1}$, and $\nu$ on $\del M\cap\Omega_{t_1,t_2}$. Hence
\[
\begin{aligned}
        t_2^{3-n}A'(t_2)-t_1^{3-n}A'(t_1)=
        \int_{\Omega_{t_1,t_2}}
        \divsymb\left(b^{4-2n}\nabla|\nabla b|^2\right)\,\di V 
        -
        \int_{\del M\cap\Omega_{t_1,t_2}}
        b^{4-2n}\del_\nu|\nabla b|^2\,\di \sigma .
\end{aligned}
\]
The interior term is given by \Cref{lem:weighted-interior-divergence}, and the
boundary term is given by \Cref{lem:free-boundary-contribution}. This proves
the identity. The monotonicity and quantitative lower bounds follow from
$\Ric\ge0$, $h\ge0$, and the stronger assumption $h\ge1$. The differential and
coarea forms follow from the coarea formula in $M$ and on $\del M$, using
$|\nabla^{\del M}b|=|\nabla b|$ along $\del M$.
\end{proof}

As in the parabolic case, the monotonicity formula will be used only in an integral. For regular values $t$, define
\[
        F(t):=t^{3-n}A'(t).
\]
Under the assumptions $\Ric\ge0$ and $h\ge0$,
\Cref{thm:weighted-second-variation-nonparabolic} implies that $F$ is
nondecreasing on the set of regular values. After redefining $F$ on the set of
critical values, we regard $F$ as a right-continuous nondecreasing function on
$[T,\infty)$. Its distributional derivative is then a nonnegative Radon
measure, denoted by $\di F$.

When $h\ge1$, the quantitative part of
\Cref{thm:weighted-second-variation-nonparabolic}, together with the coarea
formula, gives the following inequality of Radon measures:
\begin{equation}\label{eq:finite-energy}
      \di F\ge
        \frac12 t^{2-2n}
        \left(
        \int_{\Sigma_t}\frac{|B|^2}{|\nabla b|}\,\di\sigma
        \right)\di t
        +
        2t^{4-2n}
        \left(
        \int_{\partial\Sigma_t}|\nabla b|\,\di\ell
        \right)\di t .
\end{equation}
Thus all later occurrences of $F'$ are to be understood in this measure sense.


\subsection{Contradiction argument}
In this section we complete the proof of Proposition \ref{prop:Nonparabolic confirmation of Li's conjecture} which 
confirms Li's Conjecture in the nonparabolic case. As in the parabolic case we start with an ODE lemma.

\begin{lemma}\label{lem:weighted-growth-nonparabolic}
Let $A:[T,\infty)\to(0,\infty)$ be locally absolutely continuous, and let
$F:[T,\infty)\to\mathbb R$ be a right-continuous nondecreasing function. Assume that
\begin{align}\label{weak-interpretation-nonparabolic}
        A'(t)=t^{n-3}F(t)
        \quad\text{for a.e. }t\ge T,
\end{align}
and the inequality
\[
        F\,\di A\le 2A\,\di F
\]
holds as an inequality of Radon measures on $[T,\infty)$. If $F(T)>0$, then there exists a constant $c>0$ such that
\[
        A(t)\ge c\,t^{2n-4}
\]
for all sufficiently large $t$.
\end{lemma}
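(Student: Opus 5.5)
The plan is to follow the proof of Lemma~\ref{ODE-lemma}, with the exponent $1/C$ replaced by $1/2$ and the extra weight $t^{n-3}$ absorbed at the integration step.

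First, since $F$ is nondecreasing and $F(T)>0$, we have $F(t)\ge F(T)>0$ for all $t\ge T$. Next, $A$ is positive and locally absolutely continuous, so $A^{-1/2}$ is also locally absolutely continuous, with
\[
\di\bigl(A^{-1/2}\bigr)=-\tfrac12 A^{-3/2}\,\di A .
\]
By \eqref{weak-interpretation-nonparabolic}, $\di A=t^{n-3}F\,\di t$. Because $F$ is locally BV and $A^{-1/2}$ is continuous, the BV product rule applies with no jump cross-terms:
\[
\di\bigl(FA^{-1/2}\bigr)=A^{-1/2}\,\di F-\tfrac12 A^{-3/2}F\,\di A
=\tfrac12 A^{-3/2}\bigl(2A\,\di F-F\,\di A\bigr)\ge 0 .
\]
The last inequality is exactly the assumed measure inequality $F\,\di A\le 2A\,\di F$, multiplied by the positive continuous function $\tfrac12A^{-3/2}$. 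Hence $FA^{-1/2}$ is nondecreasing on $[T,\infty)$. Setting $c_0:=F(T)A(T)^{-1/2}>0$, this gives $F(t)\ge c_0A(t)^{1/2}$ for all $t\ge T$.

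Substituting into \eqref{weak-interpretation-nonparabolic} gives, for a.e.\ $t\ge T$,
\[
A'(t)\ge c_0\,t^{n-3}A(t)^{1/2}, \qquad\text{equivalently}\qquad \frac{\di}{\di t}A(t)^{1/2}\ge \frac{c_0}{2}\,t^{n-3}.
\]
Since $A^{1/2}$ is locally absolutely continuous, we can integrate from $T$ to $t$:
\[
A(t)^{1/2}\ge A(T)^{1/2}+\frac{c_0}{2(n-2)}\bigl(t^{n-2}-T^{n-2}\bigr),
\]
which uses $n\ge3$. For $t$ large the right-hand side is at least $c_1t^{n-2}$ with $c_1>0$, and squaring gives $A(t)\ge c\,t^{2n-4}$.

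The only delicate point is justifying the product rule in the measure sense. It is harmless here because one factor, $A^{-1/2}$, is absolutely continuous, so no atoms of $\di F$ interact with jumps of the other factor. One should also check that the monotonicity of the BV function $FA^{-1/2}$, with $F$ right-continuous, gives the pointwise inequality at every $t\ge T$ and not only almost everywhere. This holds because a right-continuous function with nonnegative distributional derivative is nondecreasing everywhere.
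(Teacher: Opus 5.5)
Your proposal is correct and follows the paper's proof essentially line by line. The BV product rule shows that $FA^{-1/2}$ is nondecreasing, which gives $F\ge c_0A^{1/2}$, and integrating $\frac{\di}{\di t}A^{1/2}\ge \frac{c_0}{2}t^{n-3}$ then yields $A\ge c\,t^{2n-4}$. Your extra remarks make explicit points the paper glosses over: there are no jump cross-terms, monotonicity holds everywhere by right-continuity, and the $t^{n-2}-T^{n-2}$ term requires $t$ large.
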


\begin{proof}
Since $F$ is nondecreasing and $F(T)>0$, we have $F(t)\ge F(T)>0$ for all
$t\ge T$. In particular, $\di F$ is a nonnegative Radon measure.

Using the product rule for a function of bounded variation multiplied by a
locally absolutely continuous function, we compute in the sense of distributions:
\[
        \di \left(FA^{-1/2}\right)
        =
        A^{-1/2}\,\di F
        +
        F\,\di\left(A^{-1/2}\right) 
        =
        A^{-1/2}\,\di F
        -
        \frac12 FA^{-3/2}\,\di A .
\]
The assumed measure inequality
\[
        F\,\di A\le 2A\, \di F
\]
implies
\[
        \frac12 FA^{-3/2}\,\di \sigma
        \le
        A^{-1/2}\,\di F.
\]
Therefore
\[
        \di\left(FA^{-1/2}\right)\ge0.
\]
Hence $FA^{-1/2}$ is nondecreasing on $[T,\infty)$. Consequently, for every
$t\ge T$,
\[
        F(t)A(t)^{-1/2}
        \ge
        F(T)A(T)^{-1/2}.
\]
Set
\[
        c_0:=F(T)A(T)^{-1/2}>0.
\]
Then
\[
        F(t)\ge c_0 A(t)^{1/2}
        \quad\text{for all }t\ge T.
\]
Since $A'(t)=t^{n-3}F(t)$ for a.e. $t\ge T$, it follows that
\[
        A'(t)\ge c_0 t^{n-3}A(t)^{1/2}
\]
for a.e. $t\ge T$. Hence
\[
        \frac{d}{\di t}A(t)^{1/2}
        =
        \frac{A'(t)}{2A(t)^{1/2}}
        \ge
        \frac{c_0}{2}t^{n-3}
\]
for a.e. $t\ge T$.

Integrating from $T$ to $t$, we obtain
\[
        A(t)^{1/2} \ge
        A(T)^{1/2} +\frac{c_0}{2}\int_T^t s^{n-3}\,\di s \ge A(T)^{1/2}
        +c\,t^{n-2}.
\]
Therefore
\[
        A(t)^{1/2}\ge c\,t^{n-2}
\]
for all sufficiently large $t$, and it in turn gives
\[
        A(t)\ge c\,t^{2n-4}.
\]
This proves the lemma.
\end{proof}

\begin{proof}[Proof of Proposition \ref{prop:Nonparabolic confirmation of Li's conjecture} ]
    We split the proof into two cases, depending on signs of the quantity $F(t)=t^{n-3}A'(t)$, just like in the parabolic setting.  

  \textbf{Case I.}  Suppose that there exists a regular value $t_0$ such that $F(t_0)>0$.
        
 We first derive the measure inequality needed for \Cref{lem:weighted-growth-nonparabolic}. From
\Cref{lem:trace-free-hessian-expression} and the Cauchy--Schwarz inequality,
\[
\begin{aligned}
    \bigl(A'(t)\bigr)^2
    &=
    t^{-2n}
    \left(
        \int_{\Sigma_t} |\nabla b|B(\n,\n)\,\di\sigma
    \right)^2  \\
    &\leq
    t^{-2n}
    \left(
        \int_{\Sigma_t} |\nabla b|^3\,\di\sigma
    \right)
    \left(
        \int_{\Sigma_t}\frac{|B(\n,\n)|^2}{|\nabla b|}\,\di\sigma
    \right)  \\
    &\leq
    t^{-2n}
    \left(
        \int_{\Sigma_t} |\nabla b|^3\,\di\sigma
    \right)
    \left(
        \int_{\Sigma_t}\frac{|B|^2}{|\nabla b|}\,\di\sigma
    \right).
\end{aligned}
\]
Since
\[
    \int_{\Sigma_t}|\nabla b|^3\,\di\sigma=t^{n-1}A(t),
\]
we obtain
\[
    \bigl(A'(t)\bigr)^2
    \leq
    t^{-n-1}A(t)
    \int_{\Sigma_t}\frac{|B|^2}{|\nabla b|}\,\di\sigma .
\]
Using the weak interpretation from Theorem \ref{thm:weighted-second-variation-nonparabolic},
\[
    \di F
    \geq
    \frac12 t^{2-2n}
    \int_{\Sigma_t}\frac{|B|^2}{|\nabla b|}\,\di\sigma\,\di t,
    \qquad
    F(t):=t^{3-n}A'(t),
\]
we conclude that
\[
    t^{3-n}\bigl(A'(t)\bigr)^2\,\di t
    \leq
    2A(t)\,\di F(t)
\]
as an inequality of Radon measures.
Since $F(t)=t^{3-n}A'(t)$ and $\di A=A'(t)\,\di t$, this is equivalent to
\[
    F\,\di A\le 2A\,\di F.
\]

Now suppose that there exists a sufficiently large $t_0$ such that
$F(t_0)>0$. Since $F$ is nondecreasing, $F>0$ on $[t_0,\infty)$. Applying
\Cref{lem:weighted-growth-nonparabolic} on $[t_0,\infty)$, we obtain
\[
    A(t)\geq ct^{2n-4}.
\]

On the other hand, the Cheng--Yau gradient estimate gives $|\nabla b|\leq C$
on large level sets $\Sigma_t$. 
Moreover,
\[
    t^{1-n}\int_{\Sigma_t}|\nabla b|\,\di\sigma=c,
\]
by Colding \cite{Colding2012}. Using the gradient bound, we get
\[
    A(t)
    =
    t^{1-n}\int_{\Sigma_t}|\nabla b|^3\,\di\sigma 
    \leq
    \left(\sup_{\Sigma_t}|\nabla b|^2\right)
    t^{1-n}\int_{\Sigma_t}|\nabla b|\,\di\sigma  
    \leq
    C.
\]
This contradicts the lower bound
\[
    A(t)\geq ct^{2n-4}
\]
as $t\to\infty$.

Therefore $F(t_0)>0$ is impossible.

  \textbf{Case II.} Now that we showed $F(t)>0$ is impossible for large enough $t$, then there exists $T>0$ sufficiently large so that
\[
        F(t)\leq 0
        \quad\text{for all } t\geq T .
\]
Since $F$ is nondecreasing, the limit
\[
        \ell:=\lim_{t\to\infty}F(t)
\]
exists and satisfies $\ell\leq0$. It is clear that $\ell=0$ otherwise, $\ell<0$ and
\[
A(t)=\int_T^t t^{n-3}F(t)
\]
will decay to a negative value, a contradiction. Therefore,
\[
        \lim_{t\to\infty}F(t)=0.
\]

Recall that from Theorem~\ref{thm:weighted-second-variation-nonparabolic}, $F$ is right-continuous and nondecreasing. The measure estimates  \eqref{eq:finite-energy} holds:
\begin{equation}\label{eq:finite-engergy2}
     \di F
        \ge
        \frac12 t^{2-2n}
        \left(
        \int_{\Sigma_t}\frac{|B|^2}{|\nabla b|}\,\di\sigma
        \right)\di t
        +
        2t^{4-2n}
        \left(
        \int_{\partial\Sigma_t}|\nabla b|\,\di\ell
        \right)\di t .
\end{equation}

We next derive a scale-invariant weighted energy estimates. Since $F$ is monotone nondecreasing and converges to $0$, the measure $\di F$ has finite mass on $[T,\infty)$. 
Multiplying \eqref{eq:finite-engergy2} by
$t^{n-2}$ and integrating over $[T,R]$, we obtain
\[
        \int_{[T,R]} t^{n-2}\,\di F(t)
        \geq
        \frac12\int_T^R t^{-n}
        \left(
        \int_{\Sigma_t}\frac{|B|^2}{|\nabla b|}\,\di\sigma
        \right)\di t  
        +
        2\int_T^R t^{2-n}
        \left(
        \int_{\partial\Sigma_t}|\nabla b|\,\di\ell
        \right)\di t .
\]
By the coarea formula in $M$,
\[
        \int_T^R t^{-n}
        \left(
        \int_{\Sigma_t}\frac{|B|^2}{|\nabla b|}\,\di\sigma
        \right)\di t
        =
        \int_{\{T\leq b\leq R\}}b^{-n}|B|^2\,\di V.
\]
Similarly, since $\nabla b$ is tangent to $\partial M$ and
$|\nabla^{\partial M}b|=|\nabla b|$ on $\partial M$, the coarea formula on
$\partial M$ gives
\[
        \int_T^R t^{2-n}
        \left(
        \int_{\partial\Sigma_t}|\nabla b|\,\di\ell
        \right)\di t
        =
        \int_{\partial M\cap\{T\leq b\leq R\}}
        b^{2-n}|\nabla b|^2\,\di\sigma .
\]
Hence
\[
        \int_{[T,R]} t^{n-2}\,\di F(t)\geq
        \frac12\int_{\{T\leq b\leq R\}}b^{-n}|B|^2\,\di V  \\
        +
        2\int_{\partial M\cap\{T\leq b\leq R\}}
        b^{2-n}|\nabla b|^2\,\di\sigma.
\]

It remains to bound the left-hand side uniformly in $R$. By integration by parts for the Stieltjes integral,
\[
\begin{aligned}
        \int_{[T,R]} t^{n-2}\,\di F(t)
        &=
        R^{n-2}F(R)-T^{n-2}F(T)
        -(n-2)\int_T^R t^{n-3}F(t)\,\di t\\
        &=
        R^{n-2}F(R)-T^{n-2}F(T)
        -(n-2)(A(R)-A(T)).
\end{aligned}
\]

Because $F(R)\leq0$ and $A(R)\geq0$, it follows that
\[
        \int_{[T,R]} t^{n-2}\,\di F(t)
        \leq
        -T^{n-2}F(T)+(n-2)A(T).
\]
Thus
\[
        \int_{[T,\infty)} t^{n-2}\,\di F(t)<\infty.
\]
Consequently,
\[
        \int_{\{b\geq T\}}b^{-n}|B|^2\,\di V<\infty,
\quad
        \int_{\partial M\cap\{b\geq T\}}
        b^{2-n}|\nabla b|^2\,\di\sigma<\infty.
\]
We now derive a contradiction. Recall the identity
\begin{align}\label{conservative law}
       t^{1-n} \int_{\Sigma_t}|\nabla b|\,\di\sigma
        =c.
\end{align}
It follows from the coarea formula that
\[
        \int_{\{T\leq b\leq R\}}b^{1-n}|\nabla b|^2\,\di V
        =
        \int_T^R s^{1-n}\left(\int_{\Sigma_s}|\nabla b|\,\di\sigma\right)\di s 
        =
        c\int_T^R \di s
        =
        c(R-T).
\]
Hence
\begin{align}\label{linear-divergence}
        \int_{\{b\geq T\}}b^{1-n}|\nabla b|^2\,\di V=\infty.
\end{align}

We next construct a compactly supported test function which captures this
divergent quantity. First assume $n>3$ and set
\[
        \alpha:=\frac{3-n}{2}<0.
\]
Let $\eta_R=\eta_R(b)$ be a smooth cutoff satisfying
\[
        \eta_R=0\quad\text{on }\{b\leq T\},\qquad
        \eta_R=1\quad\text{on }\{2T\leq b\leq R\},
\]
\[
        \eta_R=0\quad\text{on }\{b\geq 2R\},
\]
and
\[
        |\eta_R'|\leq \frac{C}{T}\quad\text{on }[T,2T],
        \qquad
        |\eta_R'|\leq \frac{C}{R},\quad
        |\eta_R''|\leq \frac{C}{R^2}
        \quad\text{on }[R,2R].
\]
Define
\[
        \phi_R:=\eta_R(b)b^\alpha.
\]
On the region $\{2T\leq b\leq R\}$, we have $\phi_R=b^\alpha$, and hence
\[
        |\nabla\phi_R|^2
        =
        \alpha^2 b^{2\alpha-2}|\nabla b|^2
        =
        \alpha^2 b^{1-n}|\nabla b|^2.
\]
Therefore
\[
        \int_M|\nabla\phi_R|^2\,\di V
        \geq
        \alpha^2
        \int_{\{2T\leq b\leq R\}}
        b^{1-n}|\nabla b|^2\,\di V  
        \geq cR-C.
\]
In particular,
\[
        \int_M|\nabla\phi_R|^2\,\di V\to\infty
        \qquad\text{as } R\to\infty.
\]

On the other hand, applying the trace Poincar\'e inequality Theorem \ref{app:trace-poincare} to the scalar
function $|\nabla\phi_R|$ and using Kato's inequality gives
\[
        \int_M|\nabla\phi_R|^2\,\di V
        \leq
        \frac2n\int_{\partial M}|\nabla\phi_R|^2\,\di\sigma
        +
        \int_M|\Hess\phi_R|^2\,\di V .
\]

We first estimate the boundary term. Along $\partial M$, the Neumann condition
gives $\partial_\nu b=0$, and hence $\partial_\nu\phi_R=0$. Thus
$|\nabla\phi_R|=|\nabla^{\partial M}\phi_R|$ on $\partial M$. Away from the
cutoff regions,
\[
        |\nabla^{\partial M}\phi_R|^2
        \leq Cb^{1-n}|\nabla b|^2,
\]
and the cutoff errors are supported in
\[
        \{T\leq b\leq2T\}\cup\{R\leq b\leq2R\}.
\]
The inner cutoff contribution is bounded by a constant depending only on $T$.

On the outer cutoff region
\[
        \mathcal A_R^\partial:=\partial M\cap\{R\leq b\leq2R\},
\]
we have
\[
        \nabla^{\partial M}\phi_R
        =
        \left(\eta_R'(b)b^\alpha
        +
        \alpha\eta_R(b)b^{\alpha-1}\right)\nabla^{\partial M}b.
\]
Since $\nabla^{\partial M}b=\nabla b$ on $\partial M$ and
$|\eta_R'|\leq C/R$, it follows that
\[
\begin{aligned}
        |\nabla^{\partial M}\phi_R|^2
        &\leq
        C\left(|\eta_R'|^2b^{2\alpha}
        +
        b^{2\alpha-2}\right)|\nabla b|^2  \\
        &\leq
        Cb^{2\alpha-2}|\nabla b|^2
        =
        Cb^{1-n}|\nabla b|^2 .
\end{aligned}
\]
Since $b\geq T$,
\[
        b^{1-n}\leq T^{-1}b^{2-n}.
\]
Using the finite boundary energy estimate, we get
\[
\begin{aligned}
        \int_{\mathcal A_R^\partial}
        |\nabla^{\partial M}\phi_R|^2\,\di\sigma
        &\leq
        C\int_{\partial M\cap\{R\leq b\leq2R\}}
        b^{1-n}|\nabla b|^2\,\di\sigma  \\
        &\leq
        CT^{-1}
        \int_{\partial M\cap\{R\leq b\leq2R\}}
        b^{2-n}|\nabla b|^2\,\di\sigma  \\
        &\leq
        CT^{-1}
        \int_{\partial M\cap\{b\geq T\}}
        b^{2-n}|\nabla b|^2\,\di\sigma
        \leq C(T).
\end{aligned}
\]
Consequently,
\[
        \int_{\partial M}|\nabla\phi_R|^2\,\di\sigma\leq C(T)
\]
uniformly in $R$.

It remains to estimate the Hessian term. On the region where $\eta_R=1$,
\[
        \Hess(b^\alpha)
        =
        \alpha b^{\alpha-1}\Hess b
        +
        \alpha(\alpha-1)b^{\alpha-2}db\otimes db .
\]
Using
\[
        B=\Hess b^2-2|\nabla b|^2g
        =
        2b\Hess b+2db\otimes db-2|\nabla b|^2g,
\]
we solve for $\Hess b$:
\[
        \Hess b
        =
        \frac{1}{2b}B
        -
        \frac{1}{b}db\otimes db
        +
        \frac{|\nabla b|^2}{b}g .
\]
Thus
\[
        |\Hess(b^\alpha)|^2
        \leq
        Cb^{2\alpha-4}|B|^2
        +
        Cb^{2\alpha-4}|\nabla b|^4.
\]
Since $2\alpha-4=-n-1$, this becomes
\[
        |\Hess(b^\alpha)|^2
        \leq
        Cb^{-n-1}|B|^2
        +
        Cb^{-n-1}|\nabla b|^4.
\]
The first term is integrable because
\[
        b^{-n-1}\leq T^{-1}b^{-n}
        \quad\text{on } \{b\geq T\},
\]
and
\[
        \int_{\{b\geq T\}}b^{-n}|B|^2\,\di V<\infty.
\]
For the second term, the Cheng--Yau gradient estimate for the Green distance
function gives, after increasing $T$ if necessary,
\[
        |\nabla b|\leq C
        \quad\text{on } \{b\geq T\}.
\]
Hence
\[
        b^{-n-1}|\nabla b|^4
        \leq Cb^{-n-1}|\nabla b|^2.
\]
By the coarea formula and the flux identity,
\[
\begin{aligned}
        \int_{\{b\geq T\}}b^{-n-1}|\nabla b|^2\,\di V
        &=
        \int_T^\infty s^{-n-1}
        \left(
        \int_{\Sigma_s}|\nabla b|\,\di\sigma
        \right)\di s  \\
        &=
        C\int_T^\infty s^{-n-1}s^{n-1}\,\di s
        =
        C\int_T^\infty s^{-2}\,\di s
        <\infty.
\end{aligned}
\]

The Hessian contributions from the cutoff regions are also uniformly bounded.
The inner cutoff region $\{T\leq b\leq2T\}$ is compact, so its contribution is
bounded by a constant depending only on $T$. For the outer cutoff region
\[
        \mathcal A_R:=\{R\leq b\leq2R\},
\]
write
\[
        \phi_R=\psi_R(b),
        \qquad
        \psi_R(s):=\eta_R(s)s^\alpha.
\]
Then
\[
        \nabla^2\phi_R
        =
        \psi_R''(b)\,db\otimes db
        +
        \psi_R'(b)\,\nabla^2 b.
\]
On $\mathcal A_R$, we have $R\leq b\leq2R$, so
\[
        |\psi_R'(b)|\leq Cb^{\alpha-1},
        \qquad
        |\psi_R''(b)|\leq Cb^{\alpha-2}.
\]
Therefore
\[
        |\nabla^2\phi_R|^2
        \leq
        Cb^{2\alpha-4}|\nabla b|^4
        +
        Cb^{2\alpha-2}|\nabla^2b|^2.
\]
Using again the expression of $\nabla^2b$ in terms of $B$,
\[
        |\nabla^2b|^2
        \leq
        Cb^{-2}|B|^2
        +
        Cb^{-2}|\nabla b|^4,
\]
we obtain
\[
        |\nabla^2\phi_R|^2
        \leq
        Cb^{-n-1}|B|^2
        +
        Cb^{-n-1}|\nabla b|^4
        \quad\text{on } \mathcal A_R.
\]
Consequently,
\[
\begin{aligned}
        \int_{\mathcal A_R}|\nabla^2\phi_R|^2\,\di V
        &\leq
        C\int_{\mathcal A_R}b^{-n-1}|B|^2\,\di V
        +
        C\int_{\mathcal A_R}b^{-n-1}|\nabla b|^4\,\di V .
\end{aligned}
\]
The first term is bounded by the finite energy estimate. For the second term,
using $|\nabla b|\leq C$ and the flux identity,
\[
\begin{aligned}
        \int_{\mathcal A_R}b^{-n-1}|\nabla b|^4\,\di V
        &\leq
        C\int_R^{2R}s^{-n-1}
        \left(
        \int_{\Sigma_s}|\nabla b|\,\di\sigma
        \right)\di s  \\
        &=
        C\int_R^{2R}s^{-2}\,\di s
        \leq \frac{C}{R}.
\end{aligned}
\]
Thus
\[
        \int_{\mathcal A_R}|\nabla^2\phi_R|^2\,\di V\leq C(T)
\]
uniformly in $R$. Combining the estimates on the main region and the cutoff
regions, we conclude that
\[
        \int_M|\Hess\phi_R|^2\,\di V\leq C(T)
\]
uniformly in $R$.

Therefore the trace Poincar\'e inequality Theorem \ref{app:trace-poincare} gives
\[
        \int_M|\nabla\phi_R|^2\,\di V\leq C(T)
\]
uniformly in $R$, contradicting the lower bound
\[
        \int_M|\nabla\phi_R|^2\,\di V\geq cR-C.
\]
Hence the case
\[
        t^{3-n}A'(t)\leq0
\]
for all sufficiently large $t$ is impossible when $n>3$.

It remains to treat the minor modification when $n=3$. In this case
$\alpha=(3-n)/2=0$, so $b^\alpha$ is constant. Instead, we take
\[
        \phi_R:=\eta_R(b)\log b.
\]
On the main region $\{2T\leq b\leq R\}$,
\[
        |\nabla\phi_R|^2
        =
        b^{-2}|\nabla b|^2
        =
        b^{1-n}|\nabla b|^2,
\]
and the same flux computation gives
\[
        \int_M|\nabla\phi_R|^2\,\di V\geq cR-C.
\]
Moreover,
\[
        \Hess(\log b)
        =
        b^{-1}\Hess b-b^{-2}db\otimes db.
\]
Using the same expression for $\Hess b$ in terms of $B$, we obtain
\[
        |\Hess(\log b)|^2
        \leq
        Cb^{-4}|B|^2
        +
        Cb^{-4}|\nabla b|^4.
\]
When $n=3$, the finite scale-invariant energy is
\[
        \int_{\{b\geq T\}}b^{-3}|B|^2\,\di V<\infty.
\]
Since $b^{-4}\leq T^{-1}b^{-3}$, the first term is integrable. The second
term is controlled by the Cheng--Yau estimate and the flux identity:
\[
\begin{aligned}
        \int_{\{b\geq T\}}b^{-4}|\nabla b|^4\,\di V
        &\leq
        C\int_{\{b\geq T\}}b^{-4}|\nabla b|^2\,\di V  \\
        &=
        C\int_T^\infty s^{-4}
        \left(
        \int_{\Sigma_s}|\nabla b|\,\di\sigma
        \right)\di s  \\
        &=
        C\int_T^\infty s^{-4}s^2\,\di s
        <\infty .
\end{aligned}
\]
The boundary and cutoff terms are controlled exactly as above. Hence the trace
Poincar\'e inequality Theorem \ref{app:trace-poincare} again gives a uniform upper bound for
\[
        \int_M|\nabla\phi_R|^2\,\di V,
\]
contradicting its linear divergence. This proves the contradiction in all
dimensions $n\geq3$.
\end{proof}

\appendix
\section{Trace Poincar\'e Inequality}\label{app:trace-poincare}

We record the trace Poincar\'e inequality used in the proof of our main results.

\begin{theorem}\label{Trace Poincare inequality}
Let $(M^n,g)$ be a complete Riemannian $n$-manifold with boundary
$\del M$, $H=\tr_g h$ the mean curvature of $\del M$. Assume that
\[
        \Ric_g\ge 0, \quad H\ge n-1.
\]
Then for every
$W^{1,2}$ function $f$ on $M$, it holds
\[
        \int_M f^2\,\di V
        \le
        \frac{2}{n}\int_{\del M} f^2\,\di\sigma
        +
        \frac{n+1}{2n^2}\int_M |\nabla f|^2\,\di V .
\]
Here $f|_{\del M}$ is understood in the trace sense.
\end{theorem}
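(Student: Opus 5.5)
The plan is to prove \Cref{Trace Poincare inequality} by duality through an auxiliary Dirichlet problem, with Reilly's formula as the only place where the curvature hypotheses enter. The sharp constants $\frac2n$ and $\frac{n+1}{2n^2}$ come from an optimisation at the end, and that step is not yet checked.

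\textbf{Reduction.} By density and a cutoff/exhaustion argument, it is enough to take $f$ smooth with compact support in $M$ (possibly nonzero on $\del M$). I would then work on smooth compact domains $\Omega\subset M$ that contain $\operatorname{supp}f$, whose boundary consists of a piece of $\del M$ and an interior piece lying where $f=0$.

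\textbf{Step 1: the auxiliary problem and Reilly.} On such a domain, solve
\[
\Delta u=f \ \text{in }\Omega,\qquad u=0 \ \text{on }\del\Omega.
\]
Reilly's formula with $u|_{\del\Omega}=0$ gives
\[
\int_\Omega\bigl((\Delta u)^2-|\Hess u|^2-\Ric(\nabla u,\nabla u)\bigr)\,\di V=\int_{\del\Omega}H\,u_\nu^2\,\di\sigma .
\]
Combining this with $|\Hess u|^2\ge\frac1n(\Delta u)^2$, $\Ric\ge0$ and $H\ge n-1$ on the $\del M$ part yields two things:
\[
\int_{\del M}u_\nu^2\,\di\sigma\le\frac1n\int f^2\,\di V,\qquad
\int|\Hess u|^2\,\di V\le\int f^2\,\di V-(n-1)\int_{\del M}u_\nu^2\,\di\sigma .
\]
The interior part of $\del\Omega$ is not a piece of $\del M$ and has no mean-curvature lower bound. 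I intend to take the exhaustion limit and solve the Dirichlet problem with $u=0$ only on $\del M$ in the $W^{1,2}$-completion. Making that limit rigorous in the noncompact case is a technical point I expect to handle by the standard exhaustion.

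\textbf{Step 2: duality identity.} Integrate by parts:
\[
\int f^2\,\di V=\int f\Delta u\,\di V=\int_{\del M}f\,u_\nu\,\di\sigma-\int\langle\nabla f,\nabla u\rangle\,\di V .
\]
The boundary term is bounded by $n^{-1/2}\|f\|_{L^2(\del M)}\|f\|_{L^2}$ using the first estimate of Step 1.

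\textbf{Step 3: the gradient term (main obstacle).} For the gradient term I want the bound
\[
\int|\nabla u|^2\,\di V\le\frac{c_n}{n^2}\int f^2\,\di V .
\]
I would obtain it from the Bochner/Reilly identity for $|\nabla u|^2$ together with the Dirichlet condition. On the boundary $\nabla u=u_\nu\nu$, so
\[
\int|\nabla u|^2\,\di V=-\int uf\,\di V ,
\]
and a Pohozaev-type integration against $\nabla u$, controlled by the Hessian bound of Step 1, should then close the estimate.

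\textbf{Step 4: optimisation.} Keep the exact Reilly remainder $(n-1)\int_{\del M}u_\nu^2$ rather than discarding it. Then apply weighted Young inequalities to the two terms of Step 2, with the weights chosen so that the boundary coefficient comes out to exactly $\frac2n$. The hope is that the leftover gradient coefficient is then $\frac{n+1}{2n^2}$; the $\frac{n+1}{2}$ suggests a split between a $\frac1n$ contribution from the Hessian trace and an $\frac{n-1}{n}$ contribution from the boundary term. The estimate $\int|\nabla u|^2\le\frac{c_n}{n^2}\int f^2$ in Step 3 and this final balancing are where I expect the real difficulty. If $c_n$ comes out too large, I would instead test Reilly against $u$ solving $\Delta u=f-\bar f$ with a Neumann condition, and use the boundary data to absorb the mean.
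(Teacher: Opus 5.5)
Your Step 3 is a genuine gap, not a technicality. Since $u=0$ on $\partial M$, you have $\int|\nabla u|^2=-\int uf\le\|u\|_{L^2}\|f\|_{L^2}$. So the optimal constant in $\int|\nabla u|^2\le C\int f^2$ is exactly $1/\lambda_1^D(M)$, the bottom of the Dirichlet spectrum, with equality when $u$ is a first Dirichlet eigenfunction. Combine this with $\int_{\partial M}u_\nu^2\le\frac1n\int f^2$. Any Young weight that gives boundary coefficient $\frac2n$ then leaves gradient coefficient $2/\lambda_1^D(M)$. Hence your scheme needs $\lambda_1^D(M)\ge\frac{4n^2}{n+1}$, which is within about ten percent of the model value: $\frac{16}{3}$ versus $j_0^2\approx5.78$ for $n=2$, and $9$ versus $\pi^2$ for $n=3$. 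Only a sharp eigenvalue comparison will do.

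Reilly's formula does not supply this bound. Step 1 controls $\Hess u$ and $u_\nu|_{\partial M}$. Turning $\int|\Hess u|^2+\int_{\partial M}|\nabla u|^2$ into a bound on $\int|\nabla u|^2$ is exactly the trace Poincar\'e inequality applied to $|\nabla u|$ via Kato, which is how the paper later uses the theorem. So that route is circular. A Rellich--Pohozaev identity would need a vector field $X$ with controlled full covariant derivative and controlled $\langle X,\nu\rangle$, like the position field in $\mathbb R^n$. Under $\Ric\ge0$ you only control Laplacians of distance functions, not Hessians. The same missing coercivity also undermines Step 1 on noncompact $M$, which is the case the paper needs: solving $\Delta u=f$, $u|_{\partial M}=0$ in the $W^{1,2}$ completion already requires $\lambda_1^D(M)>0$, and the exhaustion does not provide it.

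The missing ingredient is a comparison along the normal geodesics from $\partial M$. Riccati comparison gives $d_{\partial M}\le1$ and $\Delta d_{\partial M}\le-\frac{n-1}{1-d_{\partial M}}$ in the barrier sense. Barta's trick, with the radial first eigenfunction of the unit ball composed with $d_{\partial M}$, then yields Kasue's bound $\lambda_1^D(M)\ge\lambda_1^D(\mathbb B^n)=j_{n/2-1}^2$. Rayleigh's sum $\sum_k j_{\nu,k}^{-4}=\frac{1}{16(\nu+1)^2(\nu+2)}$ gives $j_{n/2-1}^2>2n\sqrt{(n+2)/2}\ge\frac{4n^2}{n+1}$. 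With that input your duality scheme would close.

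Once you have comparison along rays, though, the paper's route is much more direct. It disintegrates the volume along the rays of $d_{\partial M}$. It uses concavity of $h_\alpha^{1/(n-1)}$ with $(\log h_\alpha)'(0)\le-(n-1)$ to get $l_\alpha\le1$ and $h_\alpha(t)(1-t)^{1-n}$ nonincreasing. It then integrates the elementary bound $f(t)^2\le2f(0)^2+2t\int_0^t|f'|^2$ against this density. The constants are read off from the model density: $\frac2n=2\int_0^1(1-t)^{n-1}\di t$ and $\frac{n+1}{2n^2}=2\max_s\int_s^1 t\bigl(\frac{1-t}{1-s}\bigr)^{n-1}\di t$. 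There is no PDE to solve, no Reilly formula, and no separate treatment of the noncompact case.
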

 Our proof is inspired by Sakurai \cite{Sakurai}*{Lemma 7.3}. But we use a different language, the needle decomposition, to write the proof. We refer to Klartag \cite{KlartagNeedle}, Cavalletti--Mondino \cite{CavallettiMondino} for the basics on the needle decomposition, also known as the 1D localization technique.

\begin{proof}
 Since
$\Ric_g\ge 0$ and $H\ge n-1$, the distance function to the boundary
\[
    \dist_{\del M}(x):=\operatorname{dist}(x,\del M)
\]
induces a decomposition of $(M,g,\vol)$ into transport rays
up to a negligible set. More precisely, there exists an index space $\mathfrak Q$, identified as $\del M$, a Borel measure $\mathfrak q$ on $\mathfrak Q$, and a family of unit speed minimizing geodesics
\[
        \gamma_\alpha:[0,l_\alpha]\to M,
        \qquad \alpha\in \mathfrak Q,
\]
such that
\[
        \gamma_\alpha(0)\in \del M,
        \qquad
        \dist_{\del M}(\gamma_\alpha(t))=t,
\]
Moreover, the inscribed radius estimate of Li \cite{LiMartin2014}*{Theorem 1.1} gives
\[
        l_\alpha\le 1
\]
for $\mathfrak q$-a.e. $\alpha$.
There is a disintegration of $\vol$ associated to this decomposition as 
\[
        \int_M \varphi\,\di V
        =
        \int_Q\int_0^{l_\alpha}
        \varphi(\gamma_\alpha(t))h_\alpha(t)\,\di t\,\di \mathfrak q(\alpha).
\] 
In the meantime, the surface disintegrates as
\[
        \int_{\del M}\psi\,\di\sigma
        =
        \int_{\mathfrak Q}
        \psi(\gamma_\alpha(0))h_\alpha(0)\,\di \mathfrak q(\alpha).
\]
See Ketterer \cite{KettererHK}*{Remark 5.4}. The density $h_\alpha$ is a $\mathrm{CD}(0,n)$ density along the ray $\gamma_\alpha$, in particular $h_\alpha^{\frac{1}{n-1}}$ is concave. The curvature assumption $H\ge (n-1)$ gives the boundary derivative estimate
\[
 \frac{\di}{\di t}\bigg|_{t=0+}\log h_\alpha(t) =-H (\gamma_\alpha(0))\le -(n-1).
\]
Let $u_\alpha(t):=h_\alpha(t)^{\frac{1}{n-1}}$,
then $u$ is concave with
\[
 \frac{\di}{\di t}\bigg|_{t=0+} u_\alpha(t)\le -u_\alpha(0).
\]
The boundary estimates gives a interior one through concavity.

\begin{lemma}
For $\mathfrak q$-a.e. $\alpha$ and every $0\le t\le l_\alpha$, it holds
\[
        h_\alpha(t)
        \le
        (1-t)^{n-1}h_\alpha(0).
\]
\end{lemma}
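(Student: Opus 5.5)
The plan is to use only the two facts already recorded: $u_\alpha=h_\alpha^{1/(n-1)}$ is concave on $[0,l_\alpha]$, and its right derivative at $0$ satisfies $u_\alpha'(0^+)\le -u_\alpha(0)$. For $\mathfrak q$-a.e.\ $\alpha$ we may assume both hold, since the $\mathrm{CD}(0,n)$ property of the needle densities and the boundary derivative estimate are valid off a $\mathfrak q$-null set.

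The key step is that a concave function on an interval lies below each of its supporting lines. For $t\in(0,l_\alpha]$ and small $s\in(0,t)$, concavity says the difference quotients $\frac{u_\alpha(t)-u_\alpha(0)}{t}$ are bounded by $\frac{u_\alpha(s)-u_\alpha(0)}{s}$. Letting $s\downarrow0$ gives
\[
u_\alpha(t)\le u_\alpha(0)+t\,u_\alpha'(0^+)\le (1-t)\,u_\alpha(0).
\]
The right derivative exists in $[-\infty,\infty)$ by concavity, and if it is $-\infty$ the inequality is trivial.

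Next, $l_\alpha\le 1$ by Li's inscribed radius estimate, so $1-t\ge 0$ on $[0,l_\alpha]$. Both sides are therefore nonnegative, and raising to the power $n-1$ preserves the inequality. This yields $h_\alpha(t)\le (1-t)^{n-1}h_\alpha(0)$.

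The only delicate point is justifying the boundary value. We need $u_\alpha(0)=h_\alpha(0)^{1/(n-1)}$ to be well defined, with $h_\alpha$ continuous up to $t=0$, so that the derivative identity $\frac{\di}{\di t}\log h_\alpha(0^+)=-H(\gamma_\alpha(0))$ makes sense. I would take this from the smooth disintegration: for a smooth boundary, $h_\alpha(t)$ is the Jacobian of the normal exponential map along $\gamma_\alpha$, normalized by $h_\alpha(0)$ via Ketterer's surface disintegration. The Jacobian is smooth near $t=0$, with logarithmic derivative equal to minus the mean curvature. Concavity on the closed interval then follows by continuity.

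Alternatively, if $h_\alpha(0)=0$, the CD condition together with concavity forces $u_\alpha\equiv0$, and the inequality holds trivially.
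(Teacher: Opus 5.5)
Your argument is the same as the paper's: the supporting-line inequality $u_\alpha(t)\le u_\alpha(0)+t\,u_\alpha'(0^+)\le(1-t)u_\alpha(0)$ from concavity, then raising to the power $n-1$. Your explicit check that $1-t\ge0$, via $l_\alpha\le1$ or simply $u_\alpha\ge0$, fills in what the paper leaves implicit.

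Two of your asides are slightly off, though neither is needed. First, at the left endpoint the right derivative of a concave function lies in $(-\infty,+\infty]$, not $[-\infty,\infty)$; the $+\infty$ case is what $u_\alpha'(0^+)\le -u_\alpha(0)$ excludes. Second, if $h_\alpha(0)=0$, concavity alone does not force $u_\alpha\equiv0$ (take $u(t)=t$); in the smooth normal-exponential-map picture $h_\alpha(0)>0$ anyway.
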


\begin{proof}
Fix a ray and omit the subscript $\alpha$. Recall that $u=u_\alpha$ is concave and satisfies
\[
        u'(0+)\le -u(0).
\]
By concavity,
\[
        u(t)-u(0)\le u'(0+)t\le -u(0)t.
\]
Hence
\[
        u(t)\le (1-t)u(0).
\]
Substituting back gives
\[
        h(t)\le (1-t)^{n-1}h(0).
\]
\end{proof}
We now prove the one dimensional trace Poincar\'e inequality along each ray. For a smooth function $f$, fundamental theorem of calculus and Cauchy--Schwarz gives 
\begin{align*}
    f^2(\gamma_\alpha(t))\le 2f^2(\gamma_\alpha(0))
        +
    2\left(\int_0^t |\nabla f|(\gamma_\alpha(s))\,\di s\right)^2\le 2f^2(\gamma_\alpha(0))
        +2t\int_0^t |\nabla f|^2(\gamma_\alpha(s))\,\di s
\end{align*}
        
Multiplying by $h_\alpha(t)$ and integrating in $t$, we obtain
\[
        \int_0^{l_\alpha}
        f^2(\gamma_\alpha(t))h_\alpha(t)\,\di t
        \le
        2f^2(\gamma_\alpha(0))
        \int_0^{l_\alpha}h_\alpha(t)\,\di t
        +
        2\int_0^{l_\alpha} t h_\alpha(t)\int_0^t |\nabla f|^2(\gamma_\alpha(s))\,\di s\,\di t .
\]
We estimate the two terms separately. By the previous lemma and
$l_\alpha\le 1$,
\[
        2f^2(\gamma_\alpha(0))
        \int_0^{l_\alpha}h_\alpha(t)\,\di t
        \le
        2f^2(\gamma_\alpha(0))h_\alpha(0)
        \int_0^1(1-t)^{n-1}\,\di t       
        =
        \frac{2}{n}
        f^2(\gamma_\alpha(0))h_\alpha(0).
\]
For the second term, we switch integrals by Fubini.
\[
        2\int_0^{l_\alpha}
        t h_\alpha(t)
        \int_0^t |\nabla f|^2(\gamma_\alpha(s))\,\di s\,\di t        \\
        =
        2\int_0^{l_\alpha}
        \left(
        \int_s^{l_\alpha} t h_\alpha(t)\,\di t
        \right)
        |\nabla f|^2(\gamma_\alpha(s))\,\di s .
\]
Since $h_\alpha$ satisfies the $\mathrm{CD}(0,n)$ density estimate $h_\alpha(t)(1-t)^{1-n}\le h_\alpha(s)(1-s)^{1-n}$ when $l_\alpha\le 1$, we have
\[
\begin{aligned}
    \int_s^{l_\alpha} t h_\alpha(t)\,\di t
       & \le
        h_\alpha(s)\int_s^{l_\alpha}(1-s)^{1-n}(1-t)^{n-1} t\,\di t\\
        &\le
        h_\alpha(s)\int_s^1 (1-s)^{1-n}(1-t)^{n-1} t\,\di t\\
        &=\left(\frac{1-s}{n}-\frac{(1-s)^2}{n+1}\right) h_\alpha(s)\le\frac{n+1}{4n^2}h_\alpha(s).
\end{aligned}     
\]
Hence
\[
        2\int_0^{l_\alpha}
        \left(
        \int_s^{l_\alpha} t h_\alpha(t)\,\di t
        \right)
        |\nabla f|^2(\gamma_\alpha(s))\,\di s 
        \le
        \frac{n+1}{2n^2}\int_0^{l_\alpha}
        |\nabla f|^2(\gamma_\alpha(s))h_\alpha(s)\,\di s .
\]
Combining the two estimates, we obtain for $q$-a.e. $\alpha$,
\[
        \int_0^{l_\alpha}
        f^2(\gamma_\alpha(t))h_\alpha(t)\,\di t
        \le
        \frac{2}{n}
        f^2(\gamma_\alpha(0))h_\alpha(0)
        +
        \frac{n+1}{2n^2}\int_0^{l_\alpha}
        |\nabla f|^2(\gamma_\alpha(t))h_\alpha(t)\,\di t .
\]
Finally, integrate over $Q$ and use the disintegration formulas:
\[
\begin{aligned}
        \int_M f^2\,\di V
        &=
        \int_Q\int_0^{l_\alpha}
        f^2(\gamma_\alpha(t))h_\alpha(t)\,\di t\,dq(\alpha)        \\
        &\le
        \frac{2}{n}
        \int_Q f^2(\gamma_\alpha(0))h_\alpha(0)\,dq(\alpha)
        +
        \frac{n+1}{2n^2}\int_Q\int_0^{l_\alpha}|\nabla f|^2(\gamma_\alpha(t))h_\alpha(t)\,\di t\,dq(\alpha)        \\
        &=
        \frac{2}{n}
        \int_{\del M} f^2\,\di\sigma
        +
        \frac{n+1}{2n^2}\int_M |\nabla f|^2\,\di V .
\end{aligned}
\]
This proves the inequality for smooth functions. The general
$W^{1,2}$ case follows by the usual approximation argument and the
continuity of the trace map.
\end{proof}

The similar proof also derives the $L^p$ version of trace Poincar\'e for $p\in[1,\infty]$. We mention that if we do not use $H\ge n-1$ but keep $H\ge 0$ as a variable in the computation, we can get a sharp $L^1$ trace Poincar\'e inequality. Let $\pi: M\to \mathfrak Q\cong \partial M$ be the projection map that takes $\gamma_\alpha(t)$, $t\in[0,l_\alpha]$, to $\gamma_\alpha(0)\in \del M$, then the $L^1$ Poincar\'e takes the following form.

\begin{proposition}
    Let $(M^n,g)$ be a complete Riemannian $n$-manifold with boundary
$\del M$. Assume that
\[
        \Ric_g\ge 0,  \qquad  H\ge 0.
\]
Then for every
$W^{1,1}$ function $f$ on $M$, it holds
\[
        \int_M |f|\,\di V
        \le
        \frac{n-1}{n}\int_{\del M} \frac{|f|}{H}\,\di\sigma
        +
        \frac{1}{n}\int_M \left(\frac{n-1}{H(\pi(x))}-{\dist_{\del M}(x)}\right)|\nabla f|\,\di V .
\]
\end{proposition}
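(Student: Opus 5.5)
The plan is to follow the proof of Theorem~\ref{Trace Poincare inequality}, using the same needle decomposition along transport rays of $\dist_{\del M}$. The difference is that $H(\gamma_\alpha(0))$ is kept as a variable instead of being replaced by $n-1$. For $\mathfrak q$-a.e.\ $\alpha$, write $H_\alpha:=H(\gamma_\alpha(0))\ge 0$ and $u_\alpha=h_\alpha^{1/(n-1)}$. Concavity of $u_\alpha$ together with $u_\alpha'(0+)\le -\frac{H_\alpha}{n-1}u_\alpha(0)$ gives
\[
h_\alpha(t)\le \Bigl(1-\tfrac{H_\alpha t}{n-1}\Bigr)_+^{n-1}h_\alpha(0),
\]
so $l_\alpha\le L_\alpha:=\frac{n-1}{H_\alpha}$ (this is automatic when $H_\alpha=0$, since then $L_\alpha=\infty$). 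The monotonicity statement is more useful: for $s\le t$,
\[
h_\alpha(t)\Bigl(1-\tfrac{t}{L_\alpha}\Bigr)^{1-n}\le h_\alpha(s)\Bigl(1-\tfrac{s}{L_\alpha}\Bigr)^{1-n}.
\]
It follows by applying concavity of $u_\alpha$ on $[s,L_\alpha]$, since $u_\alpha$ stays nonnegative and lies below the chord to the zero at $L_\alpha$. The cleanest route is to note that $u_\alpha(t)/(L_\alpha-t)$ is nonincreasing whenever $u_\alpha$ is concave, nonnegative, and $u_\alpha(t)\le u_\alpha(0)(1-t/L_\alpha)$; I will verify this carefully.

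The one-dimensional $L^1$ inequality on each ray comes from $|f(\gamma(t))|\le |f(\gamma(0))|+\int_0^t|\nabla f|(\gamma(s))\,\di s$, with no squaring, which is why no factor $2$ appears. Multiply by $h_\alpha(t)$ and integrate over $[0,l_\alpha]$. The first term is bounded by
\[
|f(\gamma_\alpha(0))|h_\alpha(0)\int_0^{L_\alpha}(1-t/L_\alpha)^{n-1}\,\di t=\frac{L_\alpha}{n}|f(\gamma_\alpha(0))|h_\alpha(0)=\frac{n-1}{n}\frac{|f|}{H}(\gamma_\alpha(0))\,h_\alpha(0).
\]
For the second term, apply Fubini and then the monotonicity estimate:
\[
\int_s^{l_\alpha}h_\alpha(t)\,\di t\le h_\alpha(s)\Bigl(1-\tfrac{s}{L_\alpha}\Bigr)^{1-n}\int_s^{L_\alpha}\Bigl(1-\tfrac{t}{L_\alpha}\Bigr)^{n-1}\di t=\frac{L_\alpha-s}{n}\,h_\alpha(s).
\]
Since $s=\dist_{\del M}(\gamma_\alpha(s))$ and $\pi(\gamma_\alpha(s))=\gamma_\alpha(0)$, this weight is exactly $\frac1n\bigl(\frac{n-1}{H(\pi(x))}-\dist_{\del M}(x)\bigr)$.

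To finish, integrate over $\mathfrak Q$ and use the two disintegration formulas, as in the proof of Theorem~\ref{Trace Poincare inequality}, then approximate $W^{1,1}$ functions by smooth ones using continuity of the $W^{1,1}\to L^1(\del M)$ trace. Where $H=0$ the right-hand side is $+\infty$ unless $f$ vanishes appropriately, so the inequality holds trivially. Rays with $l_\alpha=\infty$ can occur only when $H_\alpha=0$, and those are covered by the same trivial bound.

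The main obstacle is justifying the interior monotonicity of $h_\alpha(t)(1-t/L_\alpha)^{1-n}$ from the boundary derivative alone. The argument given in Theorem~\ref{Trace Poincare inequality} only asserts it. I will derive it from concavity as follows: for $s<t<L_\alpha$, the point $t$ lies between $s$ and $L_\alpha$, and $u_\alpha$ restricted to $[s,l_\alpha]$ is concave with an extension dominated by the line through $(s,u_\alpha(s))$ vanishing at $L_\alpha$. This step needs $u_\alpha'(s+)\le -u_\alpha(s)/(L_\alpha-s)$, which I plan to propagate from $s=0$ using concavity.
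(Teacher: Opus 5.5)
Your proposal is correct and follows essentially the argument the paper intends. The paper omits this proof, saying only that it is the needle-decomposition proof of Theorem~\ref{Trace Poincare inequality} with $H$ kept as a variable, which is what you carry out; your concavity argument for the monotonicity of $h_\alpha(t)(1-t/L_\alpha)^{1-n}$ is valid and fills in a step the paper only asserts. The one step to tighten is the final approximation: the weights $1/H$ and $\frac{n-1}{H(\pi(x))}-\dist_{\del M}(x)$ can be unbounded, so it is cleaner to apply your ray-wise inequality directly to $f\in W^{1,1}$, which is absolutely continuous on $\mathfrak q$-a.e.\ ray with initial value equal to its trace, than to pass to the limit from smooth approximants.
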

Here, the geodesic-wise inscribed radius estimate \cite{LiMartin2014}*{Proof of Theorem 1.1, (3.2)} implies 
\[
\frac{n-1}{H(\pi(x))}-{\dist_{\del M}(x)}\ge 0.
\]
We skip the similar but more tedious proof. Instead, we point out that when $M$ is compact, taking $f=1$ recovers the Ros inequality \cite{Wang2021}*{Theorem 6}: 
\[
\vol(M)\le \int_{\del M}\frac{n-1}{nH}\, \di \sigma.
\]

\bibliographystyle{amsalpha} 
\bibliography{new}

\end{document}